\documentclass[11pt]{amsart}

\usepackage[T1]{fontenc}
\usepackage{lmodern}
\usepackage{amsmath,amssymb,amsthm,mathtools,mathrsfs}
\usepackage{booktabs}
\usepackage{enumitem}
\usepackage{microtype}
\usepackage{aliascnt}
\usepackage[hidelinks]{hyperref}

\allowdisplaybreaks
\numberwithin{equation}{section}
\setlist[enumerate]{leftmargin=*,itemsep=0.25em,topsep=0.4em}

\newtheorem{theorem}{Theorem}[section]
\newaliascnt{lemma}{theorem}
\newtheorem{lemma}[lemma]{Lemma}
\aliascntresetthe{lemma}
\newaliascnt{proposition}{theorem}
\newtheorem{proposition}[proposition]{Proposition}
\aliascntresetthe{proposition}
\newaliascnt{corollary}{theorem}
\newtheorem{corollary}[corollary]{Corollary}
\aliascntresetthe{corollary}
\newaliascnt{claim}{theorem}

\aliascntresetthe{claim}
\theoremstyle{definition}
\newaliascnt{definition}{theorem}

\aliascntresetthe{definition}
\newaliascnt{example}{theorem}

\aliascntresetthe{example}
\theoremstyle{remark}
\newaliascnt{remark}{theorem}
\newtheorem{remark}[remark]{Remark}
\aliascntresetthe{remark}

\usepackage[capitalise,nameinlink,noabbrev]{cleveref}
\crefname{theorem}{Theorem}{Theorems}
\crefname{lemma}{Lemma}{Lemmas}
\crefname{proposition}{Proposition}{Propositions}
\crefname{corollary}{Corollary}{Corollaries}
\crefname{claim}{Claim}{Claims}
\crefname{definition}{Definition}{Definitions}
\crefname{example}{Example}{Examples}
\crefname{remark}{Remark}{Remarks}

\newcommand{\V}{\mathsf V}
\newcommand{\Id}{\operatorname{Id}}

\newcommand{\M}{\mathbf M}

\newcommand{\NF}{\mathbf{NF}}

\newcommand{\Occ}{\operatorname{occ}}

\newcommand{\op}{\mathrm{op}}
\newcommand{\Mat}{\operatorname{Mat}}
\newcommand{\Core}{\operatorname{Core}}
\makeatletter
\renewcommand{\subjclass}[2][2020]{%
  \gdef\@subjclass{#1 \emph{Mathematics Subject Classification}. #2}%
}
\makeatother

\title[Matrix varieties over $S_7$]
{Matrix varieties over $S_7$:\\
dimension-three stability and continuum-sized subvariety intervals}

\author{Jun Jiao}
\author{Xiaolei Shao}

\subjclass[2020]{16Y60, 03C05, 08B15}

\keywords{additively idempotent semiring, matrix, identity, variety,
subvariety lattice}

\begin{document}

\begin{abstract}
We study matrix semirings over the three-element flat additively idempotent
semiring with elements one, a, and infinity, where the square of a is
infinity.  We determine the associated matrix-variety chain completely.  The
scalar, two-by-two, and three-by-three cases generate three distinct
varieties, while every matrix dimension at least three generates the same
variety.  The proof shows that every failure of an identity in arbitrary
dimension is already witnessed on three indices, and it yields a coordinate
criterion for all identities in the stable variety.  We also realize every
graph semiring arising from a directed graph of in-degree and out-degree at
most one as a divisor of a direct power of the two-by-two matrix semiring.
Consequently, the variety generated by all three-nilpotent flat semirings is
contained in the two-by-two matrix variety.  Directed cycles and independent
reversal identities then embed the power-set lattice of the odd primes into
each interval between the base variety and a nontrivial matrix variety.
Thus every such interval has continuum cardinality and contains
continuum-sized chains and antichains.  Finally, we determine the last three
powers of the multiplicative subsemiring obtained by deleting the constant
all-one matrix, and we develop general matrix operators on the lattice of
additively idempotent semiring varieties, including stable closures, stable
cores, and propagation of equality along matrix-dimension chains.
\end{abstract}

\maketitle

\section{Introduction}

An \emph{additively idempotent semiring}, or \emph{ai-semiring}, is an
algebra $(S,+,\cdot)$ whose additive reduct is a commutative idempotent
semigroup, whose multiplicative reduct is a semigroup, and in which
multiplication distributes over addition on both sides.  If $S$ is an
ai-semiring, then $\V(S)$ denotes the variety generated by $S$, and
$\Id(S)$ denotes its equational theory.

The present paper concerns the three-element flat ai-semiring
\[
 S_{7}=\{1,a,\infty\},
\]
with operations
\[
\begin{array}{c|ccc}
 +&1&a&\infty\\ \hline
 1&1&\infty&\infty\\
 a&\infty&a&\infty\\
 \infty&\infty&\infty&\infty
\end{array}
\qquad
\begin{array}{c|ccc}
 \cdot&1&a&\infty\\ \hline
 1&1&a&\infty\\
 a&a&\infty&\infty\\
 \infty&\infty&\infty&\infty.
\end{array}
\]
Thus $\infty$ is both the additive maximum and the multiplicative zero, and
$a^{2}=\infty$.  The semiring $S_{7}$ is the unique nonfinitely based
three-element ai-semiring up to isomorphism, and its position in the lattice
of ai-semiring varieties has generated a substantial finite-basis and
subvariety-lattice theory; see, in particular, \cite{JRZ,GJRZ}.

For an ai-semiring $S$, let $\M_{n}(S)$ denote the semiring of all $n$ by $n$
matrices over $S$, equipped with the usual matrix operations.  Jiao and Ren
proved that $\M_{n}(S)$ embeds into $\M_{n+1}(S)$ for every ai-semiring $S$
and every positive matrix dimension.  Applied to $S_{7}$, this gives an
ascending chain
\[
 \V(S_{7})<\V(\M_{2}(S_{7}))
 \leq\V(\M_{3}(S_{7}))\leq\cdots.
\]
They further proved that every variety in
$[\V(S_{7}),\V(\M_{n}(S_{7}))]$ is nonfinitely based, that this interval
contains at least countably many distinct varieties, and that
$\M_{n}(S_{7})\setminus\{[1]_{n}\}$ is multiplicatively $5$-nilpotent but
not $4$-nilpotent; see \cite{JR}.  Two natural questions remained: whether
the matrix-variety chain stabilizes, and whether the displayed intervals are
uncountable.

The first main result gives an exact answer to the stability question.

\begin{theorem}[Dimension-three stability]\label{thm:intro-stability}
For every $n\geq3$,
\[
 \Id(\M_{n}(S_{7}))=\Id(\M_{3}(S_{7})).
\]
Equivalently,
\[
 \V(\M_{n}(S_{7}))=\V(\M_{3}(S_{7})).
\]
Moreover,
\[
 \V(S_{7})<\V(\M_{2}(S_{7}))<\V(\M_{3}(S_{7})).
\]
Consequently, the matrix-variety chain over $S_{7}$ has exactly three
distinct members.
\end{theorem}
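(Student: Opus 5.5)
Since $\M_3(S_7)$ embeds into $\M_n(S_7)$ for $n\ge3$ by the Jiao--Ren embedding, we have $\Id(\M_n(S_7))\subseteq\Id(\M_3(S_7))$. The whole content is therefore the reverse inclusion: if an identity $u\approx v$ fails in some $\M_n(S_7)$, it already fails in $\M_3(S_7)$. We may assume $u=u_1+\dots+u_k$ and $v=v_1+\dots+v_l$ are sums of words. An entry of $u(A_1,\dots)$ at position $(i,j)$ is the sum over all words $u_s$ and all index paths $i=i_0,i_1,\dots,i_m=j$ of the products $A_{x_1}[i_0,i_1]\cdots A_{x_m}[i_{m-1},i_m]$.

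In $S_7$ a sum is $\infty$ as soon as two summands differ or one summand is $\infty$. A nonzero summand is $1$ or $a$, and it is $a$ exactly when precisely one factor equals $a$ and all other factors equal $1$. Evaluating an entry therefore reduces to a coordinate description in three parts. First, one records the set of paths whose product is $1$ (all-$1$ paths). Second, one records the paths carrying exactly one $a$. Third, the entry is $1$ or $a$ only if all surviving nonzero summands agree, and it is $\infty$ otherwise.

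Failure means that at some entry $(i,j)$ one side is $\infty$ and the other is $1$ or $a$, or that one side is $1$ and the other is $a$. Suppose first that $u$ evaluates to a finite value $c$ at $(i,j)$ while $v$ gives something different. Then either some word $v_t$ admits a path producing a value $\ne c$ or $\infty$, or the two sides have incompatible sets of realizations. The key step is a path-compression lemma. Given a single witnessing path for $v_t$, together with a witnessing path for some $u_s$ that gives $c$, we project onto the indices $\{i,j,r\}$, where $r$ collects all intermediate vertices via a quotient map $\{1,\dots,n\}\to\{1,2,3\}$ sending $i\mapsto1$, $j\mapsto2$, and everything else to $3$. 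Define new $3\times3$ matrices $B_x[p,q]$ as the sum of all $A_x[i',j']$ with $i'\mapsto p$ and $j'\mapsto q$.

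Addition is idempotent and $\infty$ is absorbing, so this sum can only push values upward. For this reason we do not sum blindly. Instead we restrict to the entries actually used by the two witnessing paths, setting all other entries to $\infty$. Because $\infty$ is both the additive maximum and the multiplicative zero, unused paths are then either killed (value $\infty$ only if they are the sole summand) or cause trouble. To handle this, the intended choice is to keep for $B_x$ only the entries on the chosen $u_s$-path and set the rest to $\infty$. On the $u$ side the value remains $c$, provided no other path through $\{1,2,3\}$ gives a different value; the problem is to show that extra paths introduced by collapsing only ever turn the $v$ side bad while not spoiling the $u$ side.

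I expect this step to be the main obstacle: collapsing vertices creates new paths. The plan is a careful case analysis on the value $c\in\{1,a\}$ and on whether the discrepancy comes from a $v$-path with a different value or from the absence of a $v$-path. The argument uses the fact that all factors in a $1$-valued path are $1$ and at most one factor is $a$. Three vertices suffice to separate "before the unique $a$-edge", "after it", and the endpoints, and this yields the coordinate criterion for $\Id(\M_3(S_7))$. Equality of varieties then follows.

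For the strict inclusions, $\V(S_7)<\V(\M_2(S_7))$ is known from \cite{JR}. For $\M_2<\M_3$, I would exhibit an identity true in $\M_2(S_7)$ but false in $\M_3(S_7)$. A candidate comes from the 2-index coordinate criterion: on two indices any path alternates among only two vertices, so a word such as $xyz$ compared with a suitable sum forcing a path to visit three distinct indices should work. Concretely, I would test identities like $xyzx + x \approx \dots$ and verify them by computer-free path counting. With the strict inclusions and the stability established, the chain has exactly three members.
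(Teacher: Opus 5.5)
\textbf{The three-index reduction is missing.} The easy inclusion $\Id(\M_n(S_7))\subseteq\Id(\M_3(S_7))$ and the step $\V(S_7)<\V(\M_2(S_7))$ are fine. The reverse inclusion, however, is the whole content of the stability claim, and you leave it as an ``expected obstacle''. The mechanism you sketch cannot supply it. In $S_7$ the element $\infty$ is additively absorbing ($x+\infty=\infty$); it is not a zero that drops out of sums. So once you set the unused entries of the $3\times3$ matrices to $\infty$, every index sequence over $\{1,2,3\}$ of every word of $u$ that touches such an entry makes the $u$-entry $\infty$, and the witness of failure disappears. Summing over fibres fails too, as you noticed.

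The operation that works is restriction to a principal submatrix indexed by a set $D\supseteq\{i,j\}$ with $|D|\le3$. It creates no new paths, so a value $u_{ij}=a$ survives. What must then be proved is the following. Take a summand $q$ of $v$ with $q_{ij}\neq c$, so that $u\approx u+q$ already fails. Some path of $q$ with value $\neq c$ must use at most one index outside $\{i,j\}$.

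\textbf{The two ingredients you need.}
(1) Suppose some position of $q$ is not covered by an occurrence of the same letter in $u$ whose range of entries contains it. Covering means: first or interior for a first letter, last or interior for a last letter, interior for an interior letter. Then take all-ones matrices with a single $a$ at $(1,2)$, $(1,3)$, $(3,2)$ or $(3,3)$. This gives $u_{12}=1\neq(u+q)_{12}$ directly in $\M_3(S_7)$. If every position is covered, the case $c=1$ is impossible.

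(2) If $u_{ij}=a$, each word of $u$ carries a $0/1$ layer potential $\lambda_t$. Its $a$-indicator at position $t$ is $\lambda_t(s)-\lambda_{t-1}(r)$, and at most one internal layer is nonconstant. Hence at each position the indicator is constant, row-only, or column-only. Through the covering occurrences, this makes the $a$-count of a $q$-path separable:
$N(s_1,\dots,s_{m-1})=C+\sum_t h_t(s_t)$.
So a path with $N\neq1$ can be replaced by one whose internal indices all equal $i$ except a single $k_t$.

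Your remark that three vertices separate ``before the $a$-edge, after it, and the endpoints'' does not replace this. A bad $q$-path may carry several $a$'s at unrelated positions, and only separability lets you reroute it through one extra index.

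\textbf{The strict inclusion is also unproved.} You neither fix nor verify an identity separating $\V(\M_2(S_7))$ from $\V(\M_3(S_7))$, so that strict inclusion is not established. The paper uses $x+y+xy+yx\approx x+y+xy+yx+x^2$. It holds in $\M_2(S_7)$ because an entry $a$ at $(i,j)$ on the left forces $i\neq j$ and $A_{ii}=A_{jj}=1$, so $(A^2)_{ij}=a$. It fails in $\M_3(S_7)$ for explicit matrices in which a third index $k$ has $A_{ik}=A_{kj}=a$. This makes $(A^2)_{ij}=\infty$ while the left side stays $a$.
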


The equality from dimension three onward is obtained by a three-index
principle.  We prove that whenever an identity fails in an arbitrary matrix
dimension, one can retain at most three matrix indices and obtain a failure
in $\M_{3}(S_{7})$.  The strict inequality between dimensions two and three
is witnessed by the identity
\begin{equation}\label{eq:separating-intro}
 x+y+xy+yx\ \approx\ x+y+xy+yx+x^{2}.
\end{equation}
It holds in $\M_{2}(S_{7})$ and fails in $\M_{3}(S_{7})$.  Combining the
three-index theorem with the known content--$\delta$ criterion for $S_{7}$
from \cite{GJRZ}, we also obtain a complete coordinate criterion for the
identities of the stable variety $\V(\M_{3}(S_{7}))$.

The second main result settles the cardinality question in the strongest
possible form.

\begin{theorem}[Continuum subvariety intervals]\label{thm:intro-continuum}
For every $n\geq2$,
\[
 \bigl|[\V(S_{7}),\V(\M_{n}(S_{7}))]\bigr|=2^{\aleph_{0}}.
\]
Each of these intervals contains a chain and an antichain of cardinality
$2^{\aleph_{0}}$.  Every variety in every such interval is nonfinitely based.
\end{theorem}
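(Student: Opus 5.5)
By the nesting $\V(\M_2(S_7))\le\V(\M_n(S_7))$ for $n\ge 2$, it suffices to do two things. First, produce a family $\{\mathcal W_P : P\subseteq\mathbb P_{\mathrm{odd}}\}$ of varieties inside $[\V(S_7),\V(\M_2(S_7))]$. Second, show that $P\mapsto\mathcal W_P$ is an order embedding of the power set of the odd primes. An order embedding of $(\mathcal P(\mathbb P_{\mathrm{odd}}),\subseteq)$ yields a chain of size $2^{\aleph_0}$: take, say, the sets $\{p : p<r\}$ for real $r$, using an enumeration of the primes by the rationals. It also yields an antichain of size $2^{\aleph_0}$, taken from an almost-disjoint or independent family of subsets. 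The upper bound $|[\V(S_7),\V(\M_n(S_7))]|\le 2^{\aleph_0}$ holds because varieties of ai-semirings are determined by sets of identities over a countable language. Nonfinite basedness of every member of the interval is the result of \cite{JR} quoted in the introduction, so nothing new is needed there.

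For the construction, for each odd prime $p$ I take the graph semiring $A_p$ of the directed cycle $C_p$; it has in- and out-degree one. By the realization result announced in the abstract, $A_p$ is a divisor of a direct power of $\M_2(S_7)$, so $A_p\in\V(\M_2(S_7))$. I then set
\[
\mathcal W_P=\V\bigl(\{S_7\}\cup\{A_p : p\in P\}\bigr).
\]
This lies in the interval, and $P\subseteq Q$ gives $\mathcal W_P\subseteq\mathcal W_Q$. For the converse I need, for each odd prime $q$, an identity $\sigma_q$ with the following properties:
\begin{enumerate}
\item $\sigma_q$ holds in $S_7$ and in $A_p$ for every odd prime $p\neq q$;
\item $\sigma_q$ fails in $A_q$.
\end{enumerate}
Then $q\in Q\setminus P$ gives $\mathcal W_Q\not\subseteq\mathcal W_P$, which is injectivity and order reflection.

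For $\sigma_q$ I would use a reversal identity built from a word that traces the $q$-cycle. Let
\[
w_q=x_1x_2\cdots x_q x_1,
\]
and let $\sigma_q$ equate $w_q+u$ with $w_q+u+\overleftarrow{w_q}$, where $\overleftarrow{w_q}$ is the reversed word and $u$ is a sum of short words chosen so that both sides have equal content and equal $\delta$-data. Equal content and $\delta$-data make $\sigma_q$ hold in $S_7$ by the criterion of \cite{GJRZ}. In $A_q$, assigning $x_i$ to the $i$-th edge of the cycle makes $w_q$ a nonzero path while the reverse is $0$ (or conversely, depending on the orientation convention), so the two sides differ. In $A_p$ with $p\ne q$, any assignment making $w_q$ nonzero forces a closed walk of length $q$ whose edges are all determined. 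A closed walk in $C_p$ has length divisible by $p$, and $p\nmid q$ since both are distinct primes, so no such walk exists. The same holds for the reverse, so both sides collapse to the same value.

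I expect the main obstacle to be the design of $u$ and the verification that $\sigma_q$ is simultaneously an identity of $S_7$ and of every $A_p$ with $p\ne q$. The evaluations with zero or degenerate (non-path) values are where this can go wrong. I would first try choosing $u$ to contain all two-letter factors $x_ix_j$, $x_jx_i$ together with the squares, so that any assignment not realizing a genuine closed walk already makes the left side absorb the extra summand. I would then check the $S_7$ side directly against the content--$\delta$ criterion.
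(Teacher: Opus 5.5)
\textbf{Gap: the separating identity $\sigma_q$ cannot fail in $A_q$.} Your overall architecture matches the paper: cycle graph semirings lying in $\V(\M_2(S_7))$ by \cref{thm:graph-divisor}, the varieties $\mathcal W_P=\V(S_7,\{A_p:p\in P\})$, separating identities, the cardinality count, and \cite{JR} for nonfinite basedness. The separating identity is the one step that breaks.

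In a graph semiring, every product of two elements lies in $\{0,\omega\}$, and every product involving $0$ or $\omega$ is $0$. Hence every word of length at least three evaluates to $0$. Moreover $0$ is additively absorbing, since the sum of two distinct elements is $0$. Your word $w_q=x_1x_2\cdots x_qx_1$ has length $q+1\geq 4$ and occurs on both sides of $\sigma_q$. So both sides are identically $0$ in $A_q$, and in every graph semiring, whatever $u$ is. Thus $\sigma_q$ holds in $A_q$, and order reflection collapses.

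The error comes from picturing $x_i$ as assigned to the $i$-th edge with a long product tracing a nonzero path. The elements of $S_{\mathbb G}$ are vertices, and adjacency is detected only by two-letter products. Your suggested $u$ would also hurt separation on its own. Under the cycle assignment, the reversed factors $x_{i+1}x_i$ evaluate to $0$ when $q\geq 3$, which already makes the left side $0$.

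\textbf{Missing idea.} Encode the cycle as a sum of two-letter words rather than one long word:
\[
 \mathbf c_q=x_1x_2+x_2x_3+\cdots+x_qx_1,\qquad
 \mathbf c_q^{\op}=x_2x_1+x_3x_2+\cdots+x_1x_q,
\]
and take $\sigma_q:\mathbf c_q\approx\mathbf c_q^{\op}$.
\begin{itemize}
\item In $S_7$ this holds trivially, because $S_7$ is commutative. No content--$\delta$ bookkeeping or auxiliary $u$ is needed. Note that even $w_q+\overleftarrow{w_q}\approx w_q$ holds in $S_7$ for the same reason, so the obstacle you anticipated is not where the difficulty lies.
\item In a cycle semiring $A_p$, the summands lie in $\{0,\omega\}$. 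So $\mathbf c_q=\omega$ exactly when every summand is $\omega$, that is, when the assigned vertices form a closed directed walk of length $q$. Such a walk exists if and only if $p\mid q$. For distinct odd primes it does not, so both sides are constantly $0$.
\item In $A_q$, the periodic assignment $x_i\mapsto a_i$ gives $\mathbf c_q=\omega$. Since $q\geq 3$, no reversed pair is an edge, so $\mathbf c_q^{\op}=0$.
\end{itemize}
This is \cref{lem:cycle-reversal}. Your closed-walk divisibility intuition is correct, but it only works once it is carried by length-two summands.
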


The proof proceeds through graph semirings.  Gao and Ren characterized the
nontrivial subdirectly irreducible members of the variety $\NF_{3}$ generated
by all $3$-nilpotent flat semirings: they are precisely the graph semirings
associated with directed graphs whose in-degrees and out-degrees are at most
one \cite{GR}.  We construct, for every such graph $\mathbb G$, an explicit
subsemiring of a direct power of $\M_{2}(S_{7})$ and an explicit congruence
whose quotient is the graph semiring $S_{\mathbb G}$.  It follows that
\[
 \NF_{3}\leq\V(\M_{2}(S_{7})).
\]
Directed cycle graph semirings, together with forward--reverse cycle
identities, then yield an order embedding of the power-set lattice of the odd
primes into $[\V(S_{7}),\V(\M_{2}(S_{7}))]$.

A third family of results refines the known nilpotency theorem.  Put
\[
 N_{n}=\M_{n}(S_{7})\setminus\{[1]_{n}\}.
\]
For nonempty $I,J\subseteq[n]$, let $E_{I,J}$ be the matrix having entry $a$
on $I\times J$ and entry $\infty$ elsewhere.  We prove
\[
\begin{aligned}
 N_{n}^{3}
   &=\{[\infty]_{n}\}\cup
     \{E_{I,J}:\varnothing\neq I,J\subseteq[n],
                   (I,J)\neq([n],[n])\},\\
 N_{n}^{4}
   &=\{[\infty]_{n}\}\cup
     \{E_{I,J}:\varnothing\neq I,J\subsetneq[n]\},\\
 N_{n}^{5}&=\{[\infty]_{n}\}.
\end{aligned}
\]
Consequently,
\[
 |N_{n}^{3}|=(2^{n}-1)^{2},\qquad
 |N_{n}^{4}|=(2^{n}-2)^{2}+1.
\]
Thus the last nonzero powers are described completely, rather than only
through their nilpotency index.

The final part of the paper places these concrete results in a general
variety-theoretic framework.  For every positive integer $n$, we introduce
the operator
\[
 \mathscr M_{n}(\mathcal V)
 =\V\{\M_{n}(S):S\in\mathcal V\}
\]
on the lattice of ai-semiring varieties.  These operators compose according
to multiplication of positive integers, preserve arbitrary joins, and are
monotone in the matrix dimension.  We prove a two-by-two test for matrix
stability, construct the least matrix-stable variety above a given variety
and the greatest matrix-stable variety below it, and show that one equality
at two distinct matrix dimensions forces eventual equality at all
sufficiently large dimensions.

The paper is organized as follows.  \Cref{sec:preliminaries} records the
matrix coordinate expansion and a strengthened matrix embedding.
\Cref{sec:three-index} proves dimension-three stability.  The exact chain is
determined in \cref{sec:exact-chain}, and the coordinate criterion for the
stable equational theory is obtained in \cref{sec:identity-criterion}.
Graph semirings and continuum many intermediate varieties are treated in
\cref{sec:graph-semirings,sec:continuum}.  The exact terminal powers of
$N_{n}$ are computed in \cref{sec:terminal-powers}.  Finally,
\cref{sec:matrix-operators} develops the general matrix-operator theory.

\section{Preliminaries and matrix embeddings}\label{sec:preliminaries}

An ai-semiring term is identified with a finite nonempty set of nonempty
words.  We write such a term as a formal sum
\[
 \mathbf u=u_{1}+\cdots+u_{r}.
\]
Repeated summands are immaterial because addition is idempotent.

For matrices $A_{1},\dots,A_{m}\in\M_{n}(S_{7})$ and indices $i,j\in[n]$,
\begin{equation}\label{eq:path-expansion}
 (A_{1}\cdots A_{m})_{ij}
 =\sum_{i_{1},\dots,i_{m-1}\in[n]}
   (A_{1})_{i i_{1}}(A_{2})_{i_{1}i_{2}}\cdots
   (A_{m})_{i_{m-1}j}.
\end{equation}
We call a sequence
\[
 i=i_{0},i_{1},\dots,i_{m-1},i_{m}=j
\]
an index sequence for the coordinate $(i,j)$.

We shall use the following elementary consequence of flatness repeatedly.

\begin{lemma}\label{lem:flat-sum}
Let $z_{1},\dots,z_{r}\in S_{7}$ and let $c\in\{1,a\}$.  Then
\[
 z_{1}+\cdots+z_{r}=c
\]
if and only if $z_{1}=\cdots=z_{r}=c$.
Consequently, a product in $S_{7}$ is equal to $1$ if and only if all its
factors are $1$, and it is equal to $a$ if and only if exactly one factor is
$a$ and all remaining factors are $1$.
\end{lemma}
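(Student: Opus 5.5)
The plan is to read everything off the addition and multiplication tables of $S_{7}$, first for sums and then for products.

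For the sum statement, the backward direction is immediate: addition is idempotent, so $c+\cdots+c=c$. For the forward direction we induct on $r$, and the case $r=1$ is trivial. The key observation comes from the addition table. A two-term sum $x+y$ lies in $\{1,a\}$ only when $x=y\in\{1,a\}$, because every off-diagonal entry is $\infty$ and $\infty+\infty=\infty$. Now suppose $r\ge 2$ and $z_{1}+\cdots+z_{r}=c$. Write the sum as $(z_{1}+\cdots+z_{r-1})+z_{r}$. By the two-term observation, both $z_{1}+\cdots+z_{r-1}$ and $z_{r}$ equal $c$. The induction hypothesis applied to the first summand then gives $z_{1}=\cdots=z_{r-1}=c$.

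For the product statements we use the multiplication table in the same way.
\begin{itemize}
\item A two-factor product $xy$ equals $1$ only when $x=y=1$.
\item A two-factor product $xy$ equals $a$ only when $\{x,y\}=\{1,a\}$ with exactly one factor equal to $a$. The case $a\cdot a$ is excluded because $a\cdot a=\infty$, and any factor $\infty$ forces the product to be $\infty$.
\end{itemize}

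We then induct on the number of factors, splitting $x_{1}\cdots x_{m}=(x_{1}\cdots x_{m-1})x_{m}$.
\begin{itemize}
\item If the product is $1$, both parts are $1$, and induction gives that every factor is $1$.
\item If the product is $a$, either $x_{1}\cdots x_{m-1}=a$ and $x_{m}=1$, or $x_{1}\cdots x_{m-1}=1$ and $x_{m}=a$. In the first case induction gives exactly one $a$ among $x_{1},\dots,x_{m-1}$ and $1$ elsewhere. In the second case the $1$-statement gives that all of $x_{1},\dots,x_{m-1}$ equal $1$. Either way exactly one factor is $a$ and the rest are $1$.
\end{itemize}

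The converse directions are immediate, since $1$ is the multiplicative identity.

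There is no real obstacle here. The only point needing care is checking the relevant table entries. The word ``consequently'' in the statement refers to the same flatness phenomenon, here on the multiplicative side: $\infty$ is absorbing and $a^{2}=\infty$.
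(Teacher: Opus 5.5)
Your proof is correct and takes essentially the same approach as the paper: both read the claims directly off the addition and multiplication tables of $S_{7}$, using flatness of addition, the fact that $\infty$ is absorbing, and $a^{2}=\infty$. Your inductions on the number of summands and factors just spell out what the paper's two-line proof leaves to the reader.
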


\begin{proof}
The assertion about sums follows directly from flatness: two distinct finite
elements sum to $\infty$, and any sum containing $\infty$ is $\infty$.
The assertion about products follows from the multiplication table and
$a^{2}=\infty$.
\end{proof}

We first strengthen the usual consecutive-dimension embedding.

\begin{theorem}[Surjective-index embedding]\label{thm:surjective-embedding}
Let $S$ be an ai-semiring, let $m\geq n\geq1$, and let
$f:[m]\twoheadrightarrow[n]$ be a surjection.  Define
\[
 \Phi_{f}:\M_{n}(S)\longrightarrow\M_{m}(S),\qquad
 \bigl(\Phi_{f}(A)\bigr)_{rs}=A_{f(r),f(s)}.
\]
Then $\Phi_{f}$ is an injective semiring homomorphism.
\end{theorem}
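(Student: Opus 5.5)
The plan is to check the three required properties of $\Phi_{f}$ directly from the definition: additivity, multiplicativity, and injectivity. The only nontrivial point is multiplicativity, and it is exactly where additive idempotence of $S$ and surjectivity of $f$ are used.

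\emph{Additivity.} Since matrix addition is entrywise,
\[
\bigl(\Phi_{f}(A+B)\bigr)_{rs}=(A+B)_{f(r),f(s)}=A_{f(r),f(s)}+B_{f(r),f(s)}=\bigl(\Phi_{f}(A)+\Phi_{f}(B)\bigr)_{rs}.
\]

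\emph{Multiplicativity.} Fix $r,s\in[m]$ and write
\[
\bigl(\Phi_{f}(A)\Phi_{f}(B)\bigr)_{rs}=\sum_{t\in[m]}A_{f(r),f(t)}B_{f(t),f(s)}.
\]
Group the summands according to the value $k=f(t)\in[n]$. For each $k$, the fibre $f^{-1}(k)$ is nonempty because $f$ is surjective. Every $t$ in that fibre contributes the same summand $A_{f(r),k}B_{k,f(s)}$. Addition in $S$ is commutative and idempotent, so each nonempty block of equal summands collapses to a single copy. The sum therefore equals
\[
\sum_{k\in[n]}A_{f(r),k}B_{k,f(s)}=(AB)_{f(r),f(s)}=\bigl(\Phi_{f}(AB)\bigr)_{rs}.
\]
Surjectivity matters at this step. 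If some fibre were empty, the corresponding summand would be missing, and $S$ need not have an additive identity to account for it. Idempotence matters too, since otherwise repeated summands would not collapse.

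\emph{Injectivity.} Given $i,j\in[n]$, choose $r\in f^{-1}(i)$ and $s\in f^{-1}(j)$, which again uses surjectivity. Then $A_{ij}=\bigl(\Phi_{f}(A)\bigr)_{rs}$, so $\Phi_{f}(A)=\Phi_{f}(B)$ forces $A=B$.

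The main obstacle is the regrouping in the multiplicativity step. I would write it carefully as an explicit partition of $[m]$ into the fibres $f^{-1}(k)$, and note that no multiplicative or additive identity in $S$ is required. In the special case $m=n+1$ with $f$ identifying two indices, this recovers a consecutive-dimension embedding of the kind attributed to Jiao and Ren in the introduction.
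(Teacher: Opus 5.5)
Your proof is correct and matches the paper's argument: addition is preserved entrywise, multiplicativity follows by splitting the sum over $[m]$ into the nonempty fibres $f^{-1}(k)$ and collapsing each block of equal summands by additive idempotence, and injectivity follows by reading $A_{ij}$ off $\Phi_{f}(A)$ at chosen preimages of $i$ and $j$. Your remarks on why surjectivity and idempotence are each needed are accurate.
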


\begin{proof}
It is immediate that $\Phi_{f}$ preserves addition.  Let
$A,B\in\M_{n}(S)$.  For $r,s\in[m]$,
\begin{align*}
 (\Phi_{f}(A)\Phi_{f}(B))_{rs}
 &=\sum_{k=1}^{m}A_{f(r),f(k)}B_{f(k),f(s)}\\
 &=\sum_{t=1}^{n}\ \sum_{k\in f^{-1}(t)}
       A_{f(r),t}B_{t,f(s)}.
\end{align*}
Every fibre $f^{-1}(t)$ is nonempty.  The inner sum consists of one or more
copies of the same element, and idempotence of addition therefore gives
\[
 (\Phi_{f}(A)\Phi_{f}(B))_{rs}
 =\sum_{t=1}^{n}A_{f(r),t}B_{t,f(s)}
 =(AB)_{f(r),f(s)}
 =\bigl(\Phi_{f}(AB)\bigr)_{rs}.
\]
Thus $\Phi_{f}$ preserves multiplication.  Since $f$ is onto, one may choose
$r_{i}\in f^{-1}(i)$ for each $i\in[n]$ and recover
$A_{ij}$ as $\bigl(\Phi_{f}(A)\bigr)_{r_{i}r_{j}}$.  Hence $\Phi_{f}$ is
injective.
\end{proof}

\begin{corollary}\label{cor:dimension-chain}
For every ai-semiring $S$ and all $m\geq n\geq1$,
\[
 \M_{n}(S)\hookrightarrow\M_{m}(S),
 \qquad
 \V(\M_{n}(S))\leq\V(\M_{m}(S)).
\]
In particular, the map $s\mapsto[s]_{n}$ embeds $S$ into $\M_{n}(S)$.
\end{corollary}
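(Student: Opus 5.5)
The corollary follows from \cref{thm:surjective-embedding} together with the fact that varieties are closed under isomorphic copies of subalgebras. There are three short steps.

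First, given $m\geq n\geq1$, we choose a concrete surjection $f:[m]\twoheadrightarrow[n]$. A convenient choice is $f(r)=r$ for $r\leq n$ and $f(r)=n$ for $r>n$. This map is onto because $m\geq n$. By \cref{thm:surjective-embedding}, $\Phi_{f}$ is an injective semiring homomorphism $\M_{n}(S)\to\M_{m}(S)$. Hence $\M_{n}(S)$ is isomorphic to the subsemiring $\Phi_{f}(\M_{n}(S))$ of $\M_{m}(S)$.

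Second, the variety $\V(\M_{m}(S))$ is closed under subalgebras and isomorphic images. It therefore contains $\M_{n}(S)$. Since $\V(\M_{n}(S))$ is the least variety containing $\M_{n}(S)$, we get $\V(\M_{n}(S))\leq\V(\M_{m}(S))$.

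Third, for the final claim we take the case $1$ in place of $n$ and $n$ in place of $m$. We identify $\M_{1}(S)$ with $S$ via $s\mapsto(s)$; this is an isomorphism because $1\times1$ matrix operations are exactly the operations of $S$. The only surjection $f:[n]\to[1]$ is the constant map. With this $f$, $\Phi_{f}((s))$ is the matrix all of whose entries equal $s$, namely $[s]_{n}$. Therefore $s\mapsto[s]_{n}$ is the composite of these two maps and is an embedding.

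None of the steps is a real obstacle. The only point needing care is that the surjection must exist, and this is exactly where the hypothesis $m\geq n$ is used.
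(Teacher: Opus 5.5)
Your proposal is correct and follows the paper's route: the corollary is stated as an immediate consequence of \cref{thm:surjective-embedding}, applied to any surjection $[m]\twoheadrightarrow[n]$ together with closure of varieties under isomorphic copies of subalgebras. The constant-matrix embedding is the case of the constant surjection $[n]\to[1]$ after identifying $\M_{1}(S)$ with $S$, exactly as you describe.
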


\section{A three-index theorem for identities}\label{sec:three-index}

The purpose of this section is to establish
\[
 \Id(\M_{n}(S_{7}))=\Id(\M_{3}(S_{7}))\qquad(n\geq3).
\]
The proof uses only the coordinate expansion \eqref{eq:path-expansion} and the
fact that a finite value in $S_{7}$ forces every summand in that expansion to
have the same value.

\subsection{The matrix positions used by a word occurrence}

Fix a coordinate $(i,j)$ and a word
\[
 w=x_{1}x_{2}\cdots x_{m}.
\]
At its $t$-th position, the entry of the matrix assigned to $x_{t}$ that may
occur in \eqref{eq:path-expansion} has one of the following forms:
\begin{equation}\label{eq:occurrence-types}
\begin{array}{c|c}
\text{position of the occurrence}&\text{possible matrix entries}\\ \hline
m=1&(i,j)\\
 m\geq2,\ t=1&(i,s),\ s\in[n]\\
 m\geq2,\ t=m&(r,j),\ r\in[n]\\
 1<t<m&(r,s),\ r,s\in[n].
\end{array}
\end{equation}
We refer only to these four possibilities and do not attach any additional
terminology to them.  In what follows, ``first'' and ``last'' always refer to
an occurrence in a word of length at least two; a one-letter word is treated
separately.

\begin{lemma}\label{lem:missing-position}
Let $\mathbf u$ be an ai-semiring term and let
$q=x_{1}\cdots x_{m}$ be a word.  Suppose that one of the following occurs.
\begin{enumerate}[label=\textup{(\roman*)}]
\item $m=1$, and $x_{1}$ does not occur in any word of $\mathbf u$;
\item $m\geq2$, and $x_{1}$ occurs neither as the first letter nor as an
      interior letter of any word of $\mathbf u$;
\item $m\geq2$, and $x_{m}$ occurs neither as the last letter nor as an
      interior letter of any word of $\mathbf u$;
\item for some $1<t<m$, the letter $x_{t}$ has no interior occurrence in any
      word of $\mathbf u$.
\end{enumerate}
Then the identity
\[
 \mathbf u\approx\mathbf u+q
\]
fails in $\M_{3}(S_{7})$.
\end{lemma}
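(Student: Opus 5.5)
The plan is to exhibit, in each of the four cases, an explicit assignment of matrices in $\M_{3}(S_{7})$ such that the $(1,3)$-entry of $\mathbf u$ equals $1$ while the $(1,3)$-entry of $q$ is different from $1$. By \cref{lem:flat-sum}, the $(1,3)$-entry of $\mathbf u+q$ is then $\infty\neq1$, so $\mathbf u\approx\mathbf u+q$ fails. Throughout, every letter other than the distinguished one is assigned the all-one matrix $[1]_{3}$. The distinguished letter receives a matrix whose entries are $1$ except in a few carefully chosen positions. These positions are read off from the table \eqref{eq:occurrence-types} for the coordinate $(i,j)=(1,3)$.

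The cases are handled as follows.
\begin{enumerate}[label=\textup{(\roman*)}]
\item Assign $x_{1}\mapsto[a]_{3}$. Every word of $\mathbf u$ avoids $x_{1}$, so each summand in \eqref{eq:path-expansion} is $1$ and $\mathbf u_{13}=1$. Meanwhile $q=x_{1}$ gives $q_{13}=a$.
\item Let $x_{1}$ be the matrix with entries $(1,1)$ and $(1,2)$ equal to $a$ and all other entries $1$. The occurrences of $x_{1}$ in $\mathbf u$ are only last letters, which use entries $(r,3)$, or one-letter words, which use entry $(1,3)$. All of these entries are $1$, so every index sequence contributes $1$ and $\mathbf u_{13}=1$. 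In $q$ with $m\geq2$, the first letter $x_{1}$ uses entry $(1,s)$ for every $s$. The sequence $1,3,3,\dots,3$ contributes $1$ (any later occurrences of $x_{1}$ hit only entries in column $3$ or entry $(3,3)$, which are $1$). The sequence $1,1,3,\dots$ has its first factor equal to $a$ and, I expect, all other factors $1$, so it contributes $a$. Hence $q_{13}=1+a=\infty$.
\item This case is the transpose of (ii). Let $x_{m}$ be the matrix with $(2,3)=(3,3)=a$ and all other entries $1$, and use the index sequences $1,1,\dots,1,3$ and $1,\dots,1,2,3$.
\item Let $x_{t}$ be the matrix with $(2,2)=a$ and all other entries $1$. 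Occurrences of $x_{t}$ in $\mathbf u$ are first, last, or one-letter occurrences, so they use entries in row $1$, in column $3$, or entry $(1,3)$. None of these is $(2,2)$, hence $\mathbf u_{13}=1$. In $q$, the index sequence $1,1,\dots,1,3$ contributes $1$. The sequence that is $1$ up to position $t-2$, then $2$ at positions $t-1$ and $t$, then continues to $3$, routes the interior occurrence through entry $(2,2)$. That gives a product containing at least one $a$. Thus $q_{13}=\infty$. This is the step where the third index is essential: the index $2$ must differ from both $1$ and $3$.
\end{enumerate}

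The main care point is bookkeeping when the distinguished letter occurs several times in $q$, or when its one-letter occurrences in $\mathbf u$ conflict with the case hypotheses. In (ii)–(iv), repeated occurrences in $q$ can contribute additional $a$ factors along the chosen path. That is harmless, however: the path then contributes $a$ or $\infty$, either of which is different from the $1$ contributed by the all-one path. So the sum is still $\infty$, and I only need one path with value $1$ and one path with value $\neq1$. For $\mathbf u$, I must check that the modified entries are never reachable. This follows directly from \eqref{eq:occurrence-types} together with the case hypothesis.
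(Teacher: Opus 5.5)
Your proof is correct and is essentially the paper's argument: every letter receives $[1]_{3}$ except the distinguished one, whose matrix is set to $a$ at entries that only the missing occurrence type can reach. The differences are cosmetic. You use the coordinate $(1,3)$ instead of $(1,2)$, you choose different perturbed entries, and you exhibit an all-one index sequence to get $q_{13}=\infty$ where the paper only needs $q_{12}\neq1$. One small correction to your aside in case (iv): the third index is not essential, since perturbing the entry $(3,1)$ instead of $(2,2)$ also works (it lies neither in row $1$ nor in column $3$), so the identity in fact already fails in $\M_{2}(S_{7})$.
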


\begin{proof}
We evaluate at the coordinate $(1,2)$.  Assign the constant matrix $[1]_{3}$
to every variable, except that one entry of the matrix $X$ assigned to the
letter singled out in the relevant case is changed from $1$ to $a$:
\[
\begin{array}{c|c}
\text{case}&\text{changed entry}\\ \hline
\textup{(i)}&X_{12}\\
\textup{(ii)}&X_{13}\\
\textup{(iii)}&X_{32}\\
\textup{(iv)}&X_{33}.
\end{array}
\]
In case \textup{(i)}, the variable does not occur in $\mathbf u$.  In case
\textup{(ii)}, every occurrence of the variable in $\mathbf u$ is either a
last occurrence, which uses only column $2$, or a one-letter occurrence,
which uses only $(1,2)$.  Thus no word of $\mathbf u$ can use $(1,3)$.  The
argument in case \textup{(iii)} is symmetric.  In case \textup{(iv)}, first
occurrences use row $1$, last occurrences use column $2$, and one-letter
occurrences use $(1,2)$, so none can use $(3,3)$.

It follows that every index sequence for every word in $\mathbf u$ uses only
entries equal to $1$.  Hence
\[
 \mathbf u_{12}=1.
\]
On the other hand, $q$ has an index sequence that uses the changed entry.  Its
value at $(1,2)$ is therefore either $a$ or $\infty$, but not $1$.  Thus
$(\mathbf u+q)_{12}\neq\mathbf u_{12}$.
\end{proof}

\subsection{Words whose value is \texorpdfstring{$a$}{a}}

\begin{lemma}[Layer structure]\label{lem:layer-structure}
Let $A_{1},\dots,A_{m}\in\M_{n}(S_{7})$ and suppose that
\[
 (A_{1}\cdots A_{m})_{ij}=a.
\]
Put
\[
 V_{0}=\{i\},\qquad V_{m}=\{j\},\qquad
 V_{t}=[n]\quad(1\leq t<m).
\]
Then there exist maps
\[
 \lambda_{t}:V_{t}\longrightarrow\{0,1\}\qquad(0\leq t\leq m)
\]
such that
\[
 \lambda_{0}(i)=0,\qquad \lambda_{m}(j)=1,
\]
and, for every $1\leq t\leq m$, $r\in V_{t-1}$ and $s\in V_{t}$,
\begin{equation}\label{eq:lambda-edge}
 (A_{t})_{rs}=
 \begin{cases}
  1,&\lambda_{t}(s)=\lambda_{t-1}(r),\\
  a,&\lambda_{t}(s)=\lambda_{t-1}(r)+1.
 \end{cases}
\end{equation}
In particular, no entry occurring in the indicated ranges is $\infty$.
Furthermore, at most one of the internal maps
$\lambda_{1},\dots,\lambda_{m-1}$ is nonconstant.
\end{lemma}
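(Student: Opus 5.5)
The plan is to define the maps $\lambda_{t}$ from the values of \emph{prefix products} and to check their properties using only \cref{lem:flat-sum}. If $m=1$ there is nothing internal: take $\lambda_{0}(i)=0$ and $\lambda_{1}(j)=1$, and \eqref{eq:lambda-edge} just says $(A_{1})_{ij}=a$. So assume $m\geq2$.

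By \eqref{eq:path-expansion} and \cref{lem:flat-sum}, every index sequence $i=i_{0},i_{1},\dots,i_{m}=j$ has path product
\[
(A_{1})_{i_{0}i_{1}}\cdots(A_{m})_{i_{m-1}i_{m}}=a.
\]
Hence along every index sequence exactly one factor equals $a$ and all the others equal $1$. Moreover, since $V_{t}=[n]$ for $1\le t<m$, every index $s$ at every internal layer lies on some index sequence.

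\emph{Step 1 (well-definedness).} For $0\le t\le m$ and $s\in V_{t}$, consider all prefixes $i=i_{0},\dots,i_{t}=s$. Fix one suffix $\sigma$ from $s$ at layer $t$ to $j$. Every prefix value $p$ then satisfies $p\cdot\sigma=a$, so $p\in\{1,a\}$.
\begin{itemize}
\item If one prefix had value $1$, then $\sigma=a$.
\item If another prefix had value $a$, then $\sigma=1$.
\end{itemize}
These cannot both hold, so all prefixes ending at $s$ share a common value. Set $\lambda_{t}(s)=0$ if this value is $1$ and $\lambda_{t}(s)=1$ if it is $a$. The empty prefix gives $\lambda_{0}(i)=0$, and the full product gives $\lambda_{m}(j)=1$.

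\emph{Step 2 (edge rule \eqref{eq:lambda-edge}).} Take $r\in V_{t-1}$ and $s\in V_{t}$. Extend any prefix ending at $r$ by the entry $(A_{t})_{rs}$. This gives a prefix ending at $s$, so
\[
(\text{value at } r)\cdot(A_{t})_{rs}=(\text{value at } s)\in\{1,a\}.
\]
We read off the cases from the multiplication table.
\begin{itemize}
\item If $\lambda_{t-1}(r)=\lambda_{t}(s)$, the entry must be $1$. When both values are $a$, this is because $a\cdot a=\infty$ and $a\cdot\infty=\infty$.
\item If $\lambda_{t-1}(r)=0$ and $\lambda_{t}(s)=1$, the entry must be $a$.
\item The case $\lambda_{t-1}(r)=1$, $\lambda_{t}(s)=0$ would require $a\cdot x=1$, which is impossible.
\end{itemize}
So only the two displayed cases occur, and in particular no entry in the indicated ranges is $\infty$.

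\emph{Step 3 (at most one nonconstant internal map).} Step 2 shows that $\lambda$ is nondecreasing along every index sequence. Suppose $\lambda_{t}$ and $\lambda_{t'}$ were both nonconstant with $1\le t<t'\le m-1$. Choose $r\in[n]$ with $\lambda_{t}(r)=1$ and $s\in[n]$ with $\lambda_{t'}(s)=0$. Since all indices are allowed at internal layers, some index sequence passes through $r$ at layer $t$ and through $s$ at layer $t'$. Monotonicity then gives $0=\lambda_{t'}(s)\ge\lambda_{t}(r)=1$, a contradiction.

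The only real obstacle is Step 1: one must see that the common value is forced by comparing prefixes against a single fixed suffix, so that distinct prefixes to the same index cannot carry different values. Steps 2 and 3 are then immediate.
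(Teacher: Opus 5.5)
Your proof is correct and follows essentially the same route as the paper. You define $\lambda_{t}$ from prefix products rather than by counting $a$'s along prefixes, which is equivalent by \cref{lem:flat-sum}. Your well-definedness via a fixed common suffix, the edge rule obtained by extending prefixes, and the final step combining monotonicity with full connectivity of internal layers all match the paper's argument.
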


\begin{proof}
By \cref{lem:flat-sum}, every index sequence from $i$ to $j$ in
\eqref{eq:path-expansion} has product $a$.  Hence every such sequence contains
exactly one entry equal to $a$, and all its other entries are $1$.

Fix $t$ and $s\in V_{t}$.  Choose an arbitrary index sequence from $i$ to
$s$ through the first $t$ matrices and define $\lambda_{t}(s)$ to be the
number of entries equal to $a$ along this partial sequence.  This number is
independent of the chosen partial sequence.  Indeed, if two partial sequences
ending at $s$ contained different numbers of $a$'s, append the same arbitrary
sequence from $s$ to $j$.  The two resulting complete index sequences would
contain different numbers of $a$'s, although both must contain exactly one.

Every partial sequence can be extended to a complete one, so
$\lambda_{t}(s)\in\{0,1\}$.  Appending the entry $(A_{t})_{rs}$ to a partial
sequence ending at $r$ gives
\[
 \lambda_{t}(s)=\lambda_{t-1}(r)+
 \begin{cases}
 0,&(A_{t})_{rs}=1,\\
 1,&(A_{t})_{rs}=a.
 \end{cases}
\]
This proves \eqref{eq:lambda-edge} and also excludes $\infty$.

If $\lambda_{t-1}$ takes the value $1$ somewhere, then
\eqref{eq:lambda-edge} forces $\lambda_{t}$ to be identically $1$, since every
state in $V_{t-1}$ is connected to every state in $V_t$ in the matrix
expansion.  Dually, if $\lambda_{t}$ takes the value $0$ somewhere, then
$\lambda_{t-1}$ is identically $0$.  Therefore, if some internal
$\lambda_{t}$ is nonconstant, all earlier layers are identically $0$ and all
later layers are identically $1$.  There can be at most one such layer.
\end{proof}

\begin{corollary}\label{cor:one-index-dependence}
Under the assumptions of \cref{lem:layer-structure}, define
\[
 \eta_{t}(r,s)=
 \begin{cases}
 0,&(A_{t})_{rs}=1,\\
 1,&(A_{t})_{rs}=a.
 \end{cases}
\]
On the range of entries used at position $t$, the function $\eta_{t}$ is one
of the following:
\[
 \text{a constant},\qquad \rho_{t}(r),\qquad \tau_{t}(s),
\]
where $\rho_{t}$ depends only on the row index and $\tau_{t}$ only on the
column index.
\end{corollary}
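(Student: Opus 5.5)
By \eqref{eq:lambda-edge} in \cref{lem:layer-structure}, on the range of entries used at position $t$ we have
\[
 \eta_{t}(r,s)=\lambda_{t}(s)-\lambda_{t-1}(r),
\]
because every such entry is either $1$ or $a$. The corollary then reduces to checking that at least one of $\lambda_{t-1}$ and $\lambda_{t}$ is constant on its layer. Whenever that holds, the difference depends on at most one of the two indices.

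We argue by the possible shapes of the layer maps. Suppose first that $t=1$. Then $V_{0}=\{i\}$ is a singleton, so $\lambda_{0}$ is constant and $\eta_{1}(i,s)=\lambda_{1}(s)$. This is a function $\tau_{1}$ of the column index alone, or a constant when $m=1$. Symmetrically, for $t=m$ the layer $V_{m}=\{j\}$ is a singleton, so $\lambda_{m}\equiv1$ and $\eta_{m}(r,j)=1-\lambda_{m-1}(r)$ depends only on the row index. For $1<t<m$, both $\lambda_{t-1}$ and $\lambda_{t}$ are internal maps. By the last assertion of \cref{lem:layer-structure}, at most one of them is nonconstant, and we split into three subcases:
\begin{itemize}[label=--]
 \item if both are constant, $\eta_{t}$ is constant;
 \item if only $\lambda_{t}$ is nonconstant, then $\eta_{t}(r,s)=\lambda_{t}(s)-c$ depends only on $s$;
 \item if only $\lambda_{t-1}$ is nonconstant, then $\eta_{t}(r,s)=c'-\lambda_{t-1}(r)$ depends only on $r$.
\end{itemize}

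The one point that needs care is justifying the identity $\eta_{t}=\lambda_{t}-\lambda_{t-1}$. This is exactly what \eqref{eq:lambda-edge} says, once we recall that the lemma already excludes $\infty$ on these ranges and requires $\lambda_{t}(s)-\lambda_{t-1}(r)\in\{0,1\}$ there. No entry $(A_{t})_{rs}$ in the relevant range escapes this description, since $V_{t-1}\times V_{t}$ is precisely the set of entries that can occur at position $t$, as listed in \eqref{eq:occurrence-types}.
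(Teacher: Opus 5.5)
Your proof is correct and follows the paper's route: you rewrite $\eta_{t}(r,s)=\lambda_{t}(s)-\lambda_{t-1}(r)$ via \eqref{eq:lambda-edge}, then split on which of the two adjacent layer maps is nonconstant. Your separate handling of $t=1$ and $t=m$, where $V_{0}$ and $V_{m}$ are singletons, only makes explicit what the paper's three-case split leaves implicit.
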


\begin{proof}
By \eqref{eq:lambda-edge},
\[
 \eta_{t}(r,s)=\lambda_{t}(s)-\lambda_{t-1}(r).
\]
If both adjacent layer maps are constant, then $\eta_{t}$ is constant.  If
$\lambda_{t}$ is the unique nonconstant layer, then
$\lambda_{t-1}\equiv0$ and $\eta_{t}$ depends only on $s$.  If
$\lambda_{t-1}$ is the unique nonconstant layer, then
$\lambda_{t}\equiv1$ and $\eta_{t}$ depends only on $r$.
\end{proof}

\subsection{Three indices are sufficient}

\begin{lemma}\label{lem:separated-count}
Let $\mathbf u$ be a term, let $q=x_{1}\cdots x_{m}$ be a word, and suppose
that under a matrix assignment in $\M_{n}(S_{7})$ one has
\[
 \mathbf u_{ij}=a.
\]
Assume that none of the four alternatives in \cref{lem:missing-position}
occurs.  For an index sequence
\[
 i=s_{0},s_{1},\dots,s_{m-1},s_{m}=j
\]
let $N(s_{1},\dots,s_{m-1})$ be the number of entries equal to $a$ in the
corresponding product for $q$.  Then all entries used by $q$ belong to
$\{1,a\}$, and there exist a nonnegative integer $C$ and maps
\[
 h_{t}:[n]\longrightarrow\{0,1,2\}\qquad(1\leq t<m)
\]
such that
\begin{equation}\label{eq:separated-count}
 N(s_{1},\dots,s_{m-1})=C+\sum_{t=1}^{m-1}h_{t}(s_{t}).
\end{equation}
\end{lemma}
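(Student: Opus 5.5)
The plan is to transfer the layer information from the words of $\mathbf u$ to each position of $q$, using the hypotheses that exclude the four alternatives of \cref{lem:missing-position}. Since $\mathbf u_{ij}=a$, \cref{lem:flat-sum} shows that every word $w$ of $\mathbf u$ satisfies $w_{ij}=a$. We may therefore apply \cref{lem:layer-structure} and \cref{cor:one-index-dependence} to each word of $\mathbf u$ separately. Consequently every matrix entry used by any word of $\mathbf u$ lies in $\{1,a\}$. Moreover, on the range of entries used at a given occurrence, the indicator $\eta$ of the entry $a$ is a constant, a function of the row index only, or a function of the column index only.

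Next we handle the positions of $q$ one at a time, with the types taken from \eqref{eq:occurrence-types}.

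\emph{One-letter word ($m=1$).} The letter $x_1$ occurs somewhere in $\mathbf u$, and every occurrence can use the entry $(i,j)$. Indeed, a first occurrence uses $(i,s)$ for all $s$, a last occurrence uses $(r,j)$ for all $r$, and an interior occurrence uses every entry. So $(X_1)_{ij}\in\{1,a\}$, and $N$ is a constant $C\in\{0,1\}$.

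\emph{First position, $m\geq2$.} The letter $x_1$ occurs in $\mathbf u$ as a first or an interior letter. In either case that occurrence uses all entries $(i,s)$, $s\in[n]$. On this set the indicator $\eta$ is either constant or a function of $s$, since a function of the row alone is constant here because the row is fixed at $i$. So the contribution of position $1$ is $c_1+g_1(s_1)$ with $g_1(s_1)\in\{0,1\}$.

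\emph{Last position.} Symmetrically, position $m$ contributes $c_m+g'_{m}(s_{m-1})$.

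\emph{Interior position $1<t<m$.} The letter $x_t$ has an interior occurrence in some word of $\mathbf u$, which uses every entry $(r,s)$. By \cref{cor:one-index-dependence}, on the full set $[n]\times[n]$ the indicator $\eta$ is a constant, a function $\rho(r)$, or a function $\tau(s)$. Hence position $t$ contributes one term in $\{0,1\}$ that depends on $s_{t-1}$ alone or on $s_t$ alone (a constant being absorbed into $C$).

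Finally we sum over positions. Each internal index $s_t$ ($1\le t\le m-1$) is touched only by positions $t$ and $t+1$. Each of these contributes at most a $\{0,1\}$-valued function of $s_t$. Collecting all terms depending on $s_t$ gives $h_t:[n]\to\{0,1,2\}$, and the remaining constants give $C$, which yields \eqref{eq:separated-count}.

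The main care point is the bookkeeping in the interior case. The single occurrence of $x_t$ in $\mathbf u$ used to determine $\eta$ need not be at the same position as $t$ in $q$, but the corollary is applied to that occurrence, whose used range is all of $[n]\times[n]$. Since $\eta$ is a property of the matrix $X_t$ restricted to that range, it transfers to $q$'s use of $X_t$. A similar check is needed for the first and last positions: a first occurrence in $\mathbf u$ fixes the row to $i$, matching $q$'s first position, and an interior occurrence covers the required range as well. The entries used by $q$ lie in these ranges, hence in $\{1,a\}$.
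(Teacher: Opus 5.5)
Your proposal is correct and follows the paper's own proof essentially step for step. For each position of $q$ you choose an occurrence in $\mathbf u$ whose range of entries covers that position, transfer the constant/row/column form of the $a$-indicator given by \cref{cor:one-index-dependence}, and then collect the at most two contributions at each internal index into $h_{t}$. Your separate treatment of the first, last, interior and one-letter positions simply writes out what the paper compresses into a single sentence.
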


\begin{proof}
Since $\mathbf u_{ij}=a$, every word occurring in $\mathbf u$ has value $a$
at $(i,j)$, and every index sequence for every such word has product $a$.

Consider the occurrence of $x_{t}$ at position $t$ in $q$.  By the assumption,
there is an occurrence of the same variable in a word of $\mathbf u$ whose
range of possible matrix entries contains the range used at position $t$ of
$q$: for a first occurrence we use a first or interior occurrence in
$\mathbf u$; for a last occurrence, a last or interior occurrence; for an
interior occurrence, an interior occurrence; and for a one-letter word, any
occurrence of the same variable.

Apply \cref{cor:one-index-dependence} to the chosen word of $\mathbf u$.
After restricting to the entries used at position $t$ of $q$, the indicator
of whether the selected entry is $a$ is either constant, depends only on its
row index, or depends only on its column index.  In particular, none of those
entries is $\infty$.

For each position of $q$, write its $a$-indicator in one of the forms
\[
 c_{t},\qquad \rho_{t}(s_{t-1}),\qquad \tau_{t}(s_{t}).
\]
Constant contributions, as well as contributions depending only on the fixed
endpoints $s_{0}=i$ or $s_{m}=j$, are collected in $C$.  At an internal state
$s_{t}$ there can be at most one contribution from the $t$-th position and at
most one from the $(t+1)$-st position.  Their sum defines $h_{t}(s_{t})$ and
belongs to $\{0,1,2\}$.  This gives \eqref{eq:separated-count}.
\end{proof}

\begin{proposition}\label{prop:three-index-word}
Let $n\geq3$, let $\mathbf u$ be a term and let $q$ be a word.  If
\[
 \mathbf u\approx\mathbf u+q
\]
fails in $\M_{n}(S_{7})$, then it fails in $\M_{3}(S_{7})$.
\end{proposition}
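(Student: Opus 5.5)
We argue by the value of $\mathbf u$ at a coordinate where the identity fails. Suppose $\mathbf u\approx\mathbf u+q$ fails in $\M_{n}(S_{7})$. Then for some assignment and some coordinate $(i,j)$ we have $\mathbf u_{ij}\neq(\mathbf u+q)_{ij}$. Since $\infty$ is absorbing for addition, this forces $\mathbf u_{ij}\in\{1,a\}$ and $q_{ij}\neq\mathbf u_{ij}$.

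If one of the four alternatives of \cref{lem:missing-position} holds, that lemma already gives a failure in $\M_{3}(S_{7})$. So we assume none of them holds and split into two cases.

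\emph{Case $\mathbf u_{ij}=1$.} By \cref{lem:flat-sum}, every entry used by any index sequence of any word of $\mathbf u$ equals $1$. The covering argument in the proof of \cref{lem:separated-count} applies verbatim with ``value $1$'' in place of ``value $a$''. It shows that every entry used by $q$ at every position lies in a range already used by a word of $\mathbf u$ (first/interior, last/interior, interior, or any occurrence). Hence all those entries equal $1$, so $q_{ij}=1$, contradicting failure. This case therefore cannot occur.

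\emph{Case $\mathbf u_{ij}=a$.} By \cref{lem:separated-count}, all entries used by $q$ lie in $\{1,a\}$, and
\[
N(s_{1},\dots,s_{m-1})=C+\sum_{t=1}^{m-1}h_{t}(s_{t}).
\]
Failure means $q_{ij}\neq a$, i.e.\ some index sequence has $N\neq1$. The key point is that only the ranges of the $h_{t}$ matter. Since each $h_t$ depends on a single free internal index, some choice of internal indices gives $N\ne1$ exactly when the set of achievable sums $C+\sum_t h_t(s_t)$ is not $\{1\}$.

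Now we restrict to three indices. Let $K\subseteq[n]$ consist of $i$, $j$, and, for at most one internal position $t_0$, one extra index $s^{*}$ realizing $N\neq1$ together with the choices at the other positions. I would aim for $|K|\le3$ and then pull the assignment back along an injection $g:[3]\to[n]$ whose image contains $K$. Concretely, $A\mapsto (A_{g(r)g(s)})_{r,s\in[3]}$, adding a padding index if $|K|<3$.

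This restriction is not a homomorphism, so we must check directly that $\mathbf u$ keeps the value $a$ at the pulled-back coordinate and $q$ does not.

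For $\mathbf u$: every index sequence through $K$ is an index sequence in $[n]$, so every summand of the restricted expansion is still $a$. The restricted value is therefore $a$ by \cref{lem:flat-sum}.

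For $q$: the restricted sums still contain the witnessing sequence, giving a product $1$ or $\infty$. Hence $q\neq a$ there and the identity fails in $\M_3(S_7)$.

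The main obstacle is showing that a witnessing sequence for $N\neq1$ can be chosen inside $\{i,j,s^{*}\}$. Each internal index must be taken from $\{i,j,s^*\}$ while keeping the sum different from $1$.

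Because the $h_t$ arise from the layer maps of \cref{lem:layer-structure}, each $h_t$ is nonconstant only if some word of $\mathbf u$ has a nonconstant layer whose dependence is inherited. I would first try to pick every $s_t$ equal to $i$ or $j$, which already witnesses $N\neq1$ whenever the $h_t$ are all constant or the extremal values occur at $i$ or $j$. If not, at most one extra index $s^{*}$ is needed, used at all positions simultaneously: the sum is separated, so choosing $s_t\in\arg\max h_t$ or $\arg\min h_t$ independently, one index $s^{*}$ attaining a value outside $\{h_t(i),h_t(j)\}$ suffices.

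The delicate subcase is when different positions $t$ require different extra indices. I expect to resolve it with the observation from \cref{lem:layer-structure} that $\rho_t$ and $\tau_t$ come from layer maps $\lambda$ of the word of $\mathbf u$. Hence the functions $h_t$ are built from boolean functions $\lambda(\cdot)$ of the index, and the dichotomy $\lambda(s)\in\{0,1\}$ means that two suitably chosen indices, plus $i$ or $j$, realize all needed values.
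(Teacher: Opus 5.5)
Outside the three-index reduction, your proof is correct and matches the paper. This covers the reduction to $\mathbf u_{ij}\in\{1,a\}$, the appeal to \cref{lem:missing-position}, the covering argument that rules out $\mathbf u_{ij}=1$, and the passage to a principal $3\times3$ submatrix. There every index sequence of every word of $\mathbf u$ keeps product $a$, and the witnessing sequence of $q$ keeps product $1$ or $\infty$.

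The gap is exactly the step you call the main obstacle. You never exhibit an index sequence for $q$ with $N\neq1$ whose internal indices lie in a three-element set containing $i$ and $j$. Your definition of $K$ has the right shape (one extra index at one position). But you do not say what the other positions are set to, or why the resulting value of $N$ differs from $1$. The strategies you then sketch do not fill this in:
\begin{enumerate}
\item Placing one index $s^{*}$ at all internal positions at once can produce cancelling changes, a gain of $1$ at one position and a loss of $1$ at another. Nothing in your argument excludes this.
\item Choosing $s_{t}\in\arg\max h_{t}$ or $\arg\min h_{t}$ independently may need a different index at every position, so it gives no bound on the number of indices.
\item The closing appeal to the Boolean layer maps is an expectation rather than an argument. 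As phrased, it would even allow four indices.
\end{enumerate}

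The missing idea is to perturb a single position of the constant baseline $(i,\dots,i)$; this needs nothing beyond the separated form \eqref{eq:separated-count}. If $N(i,\dots,i)\neq1$, the failure already lives on $\{i,j\}$. Otherwise $N(i,\dots,i)=1$. For any witness $(k_{1},\dots,k_{m-1})$ with $N\neq1$ you then have $\sum_{t}\bigl(h_{t}(k_{t})-h_{t}(i)\bigr)\neq0$, so $h_{t}(k_{t})\neq h_{t}(i)$ for some $t$. Take the sequence equal to $i$ at every internal position except $k_{t}$ at position $t$. It has $N=1-h_{t}(i)+h_{t}(k_{t})\neq1$. Thus $D=\{i,j,k_{t}\}$ suffices, and the ``delicate subcase'' with different extra indices at different positions never arises. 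With this step inserted, the rest of your proof goes through exactly as in the paper.
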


\begin{proof}
Choose an assignment and a coordinate $(i,j)$ witnessing the failure.  If
$\mathbf u_{ij}=\infty$, then
$(\mathbf u+q)_{ij}=\infty$ as well, so this cannot witness a failure.  Hence
\[
 \mathbf u_{ij}=c\in\{1,a\},\qquad q_{ij}\neq c.
\]

If one of the alternatives in \cref{lem:missing-position} occurs, the result
follows from that lemma.  We may therefore assume that none occurs.

Suppose first that $c=1$.  Every entry used by every index sequence of every
word in $\mathbf u$ is $1$.  For each occurrence in $q$, choose a corresponding
occurrence in $\mathbf u$ as in the proof of \cref{lem:separated-count}.  The
entire range of entries used at that position of $q$ is therefore equal to
$1$.  Thus every index sequence for $q$ has product $1$, giving $q_{ij}=1$, a
contradiction.  Hence the case $c=1$ cannot occur under the present
assumption.

Now let $c=a$.  By \cref{lem:separated-count}, all products associated with
$q$ use only $1$ and $a$, and their numbers of $a$'s have the form
\eqref{eq:separated-count}.  Since $q_{ij}\neq a$, there is an index sequence
$(k_{1},\dots,k_{m-1})$ for which
\[
 N(k_{1},\dots,k_{m-1})\neq1.
\]
If
\[
 N(i,\dots,i)\neq1,
\]
then the same failure already uses only the indices $i$ and $j$.
Otherwise, $N(i,\dots,i)=1$.  Equation \eqref{eq:separated-count} then implies
that for some $t$,
\[
 h_{t}(k_{t})\neq h_{t}(i),
\]
for otherwise the two displayed values of $N$ would coincide.  The index
sequence in which every internal index is $i$ except the $t$-th one, which is
$k_{t}$, has value
\[
 1-h_{t}(i)+h_{t}(k_{t})\neq1.
\]
Thus a product for $q$ that is not equal to $a$ uses only the indices in
\[
 D=\{i,j,k_{t}\}.
\]

Restrict every assigned matrix to the principal submatrix indexed by $D$.
Every index sequence for every word in $\mathbf u$ remains an index sequence
from the original assignment, so its product is still $a$.  The chosen index
sequence for $q$ is still present and is not $a$.  If $|D|<3$, enlarge $D$ to
a three-element subset of $[n]$; the same statements remain true.  Relabelling
that subset as $\{1,2,3\}$ gives a failure in $\M_{3}(S_{7})$.
\end{proof}

We next pass from the special form in \cref{prop:three-index-word} to arbitrary
identities.  The following elementary fact is useful.  If
$\mathbf s\approx\mathbf t$ is valid and $q$ is a word occurring as a summand
of $\mathbf t$, then
\[
 \mathbf s+q\approx\mathbf t+q\approx\mathbf t\approx\mathbf s,
\]
so $\mathbf s\approx\mathbf s+q$ is also valid.

\begin{theorem}\label{thm:dimension-three}
For every $n\geq3$,
\[
 \Id(\M_{n}(S_{7}))=\Id(\M_{3}(S_{7})).
\]
\end{theorem}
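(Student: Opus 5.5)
The proof establishes the two inclusions separately. By \cref{cor:dimension-chain}, $\M_{3}(S_{7})$ embeds into $\M_{n}(S_{7})$ for $n\geq3$. Hence every identity valid in $\M_{n}(S_{7})$ is valid in $\M_{3}(S_{7})$, which gives
\[
 \Id(\M_{n}(S_{7}))\subseteq\Id(\M_{3}(S_{7})).
\]
For the reverse inclusion, I argue contrapositively. Suppose an identity $\mathbf s\approx\mathbf t$ fails in $\M_{n}(S_{7})$. I want to reduce this failure to the special shape treated in \cref{prop:three-index-word}, and then transfer it down to dimension three.

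The reduction uses the fact that ai-semiring varieties are determined by inequalities. An identity $\mathbf s\approx\mathbf t$ holds in an ai-semiring $A$ if and only if both $\mathbf s\approx\mathbf s+\mathbf t$ and $\mathbf t\approx\mathbf t+\mathbf s$ hold in $A$. Moreover, $\mathbf s\approx\mathbf s+\mathbf t$ holds if and only if $\mathbf s\approx\mathbf s+q$ holds for every word $q$ that is a summand of $\mathbf t$. The forward direction is the elementary fact recorded just before the theorem, applied to the valid identity $\mathbf s\approx\mathbf s+\mathbf t$. The converse follows by adding the identities $\mathbf s\approx\mathbf s+q$ successively over the summands $q$ of $\mathbf t$, using idempotence. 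Consequently, if $\mathbf s\approx\mathbf t$ fails in $\M_{n}(S_{7})$, then, after possibly swapping $\mathbf s$ and $\mathbf t$, there is a summand $q$ of $\mathbf t$ such that $\mathbf s\approx\mathbf s+q$ fails in $\M_{n}(S_{7})$.

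\cref{prop:three-index-word} then shows that $\mathbf s\approx\mathbf s+q$ fails in $\M_{3}(S_{7})$. Since $\mathbf s\approx\mathbf t$ holding in $\M_{3}(S_{7})$ would force $\mathbf s\approx\mathbf s+q$ to hold there (by the forward direction above), $\mathbf s\approx\mathbf t$ fails in $\M_{3}(S_{7})$. This gives the reverse inclusion and proves the theorem.

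There is no real obstacle at this stage, since all the combinatorial work is already contained in \cref{prop:three-index-word}. The only point requiring care is the decomposition of an arbitrary identity into inequalities of the form $\mathbf u\approx\mathbf u+q$ with $q$ a single word. The argument must make sure that a failure of $\mathbf s\approx\mathbf t$ really yields a failing summand-inequality in the same algebra. This is guaranteed because addition in $\M_{n}(S_{7})$ is a semilattice operation, so $x=y$ holds if and only if $x+y=x$ and $x+y=y$.
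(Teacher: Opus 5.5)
Your proof is correct and follows the paper's argument. The embedding gives one inclusion, and for the other you reduce a failure of $\mathbf s\approx\mathbf t$ in $\M_{n}(S_{7})$ to a failure of some $\mathbf s\approx\mathbf s+q$, with $q$ a summand of $\mathbf t$, and then apply \cref{prop:three-index-word}. The only difference is cosmetic: you find the failing summand through the general semilattice decomposition, which holds in any ai-semiring, whereas the paper finds it at a witnessing coordinate $(i,j)$ using flatness, choosing $q$ with $q_{ij}\neq\mathbf s_{ij}\in\{1,a\}$.
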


\begin{proof}
By \cref{cor:dimension-chain}, $\M_{3}(S_{7})$ embeds into
$\M_{n}(S_{7})$.  Therefore
\[
 \Id(\M_{n}(S_{7}))\subseteq\Id(\M_{3}(S_{7})).
\]

For the reverse inclusion, let $\mathbf s\approx\mathbf t$ be valid in
$\M_{3}(S_{7})$ and suppose, for a contradiction, that it fails in
$\M_{n}(S_{7})$.  At some coordinate the two values are distinct.  At least
one is in $\{1,a\}$; interchange the two sides if necessary and write
\[
 \mathbf s_{ij}=c\in\{1,a\},\qquad \mathbf t_{ij}\neq c.
\]
There is a word $q$ occurring in $\mathbf t$ such that $q_{ij}\neq c$;
otherwise every word of $\mathbf t$ would have value $c$, and so would their
sum.  Hence $\mathbf s\approx\mathbf s+q$ fails in $\M_{n}(S_{7})$.
By \cref{prop:three-index-word}, it fails in $\M_{3}(S_{7})$.

On the other hand, validity of $\mathbf s\approx\mathbf t$ in
$\M_{3}(S_{7})$ and the fact that $q$ is a summand of $\mathbf t$ imply the
validity of $\mathbf s\approx\mathbf s+q$ there, a contradiction.
\end{proof}

\section{The exact matrix-variety chain}\label{sec:exact-chain}

We now separate dimensions two and three.

\begin{theorem}\label{thm:epsilon}
The identity
\begin{equation}\label{eq:epsilon}
 x+y+xy+yx\ \approx\ x+y+xy+yx+x^{2}
\end{equation}
holds in $\M_{2}(S_{7})$ and fails in $\M_{3}(S_{7})$.
\end{theorem}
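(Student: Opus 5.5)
The plan has two independent halves. First, show that \eqref{eq:epsilon} holds in $\M_{2}(S_{7})$ by a direct coordinate check. Second, show that it fails in $\M_{3}(S_{7})$ by exhibiting an explicit assignment. Write $\mathbf u=x+y+xy+yx$.

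For validity in $\M_{2}(S_{7})$, fix matrices $X,Y$ and a coordinate $(i,j)$, and put $c=\mathbf u_{ij}$. If $c=\infty$, both sides equal $\infty$ at $(i,j)$. Otherwise $c\in\{1,a\}$, and by \cref{lem:flat-sum} every index sequence of every word of $\mathbf u$ has product $c$. It then suffices to prove that both summands of $(X^{2})_{ij}=\sum_{k\in[2]}X_{ik}X_{kj}$ equal $c$. I would split into two cases according to whether $i=j$.

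\emph{Off-diagonal case.} If $i\neq j$, then $[2]=\{i,j\}$. The summands of $(X^{2})_{ij}$ are $X_{ii}X_{ij}$ and $X_{ij}X_{jj}$. The words $x$ and $y$ give $X_{ij}=Y_{ij}=c$. The summand $X_{ii}Y_{ij}$ of $(XY)_{ij}$ and the summand $Y_{ij}X_{jj}$ of $(YX)_{ij}$ both equal $c$.
\begin{itemize}
\item If $c=1$, \cref{lem:flat-sum} forces $X_{ii}=X_{jj}=1$.
\item If $c=a$, then $Y_{ij}=a$ forces $X_{ii}=1$ and $X_{jj}=1$.
\end{itemize}
In both cases, both summands of $(X^{2})_{ij}$ equal $c$.

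\emph{Diagonal case.} If $i=j$, let $k$ be the other index. The value $c=a$ is impossible: then $X_{ii}=Y_{ii}=a$, and the summand $X_{ii}Y_{ii}=a^{2}=\infty$ would lie in $(XY)_{ii}$. So $c=1$, and $X_{ii}=Y_{ii}=1$. The summand $X_{ik}Y_{ki}$ of $(XY)_{ii}$ equals $1$, which forces $X_{ik}=1$. Likewise, the summand $Y_{ik}X_{ki}$ of $(YX)_{ii}$ forces $X_{ki}=1$. Hence $(X^{2})_{ii}=1$.

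This is exactly where dimension two is used: every intermediate index lies in $\{i,j\}$, so each entry of $X$ that $x^{2}$ uses is already controlled by $xy$ or $yx$.

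For failure in $\M_{3}(S_{7})$, the idea is to exploit a third index $3$. The entries $X_{13}$ and $X_{32}$ are then used by $x^{2}$ but are never paired with each other in $\mathbf u$. I would evaluate at $(1,2)$ with all entries equal to $1$ except
\[
X_{12}=X_{13}=X_{32}=a,\qquad Y_{12}=a .
\]
I would then check the paths one by one.
\begin{itemize}
\item The paths of $xy$ at $(1,2)$ are $X_{11}Y_{12}$, $X_{12}Y_{22}$ and $X_{13}Y_{32}$, each equal to $a$.
\item The paths of $yx$ are $Y_{11}X_{12}$, $Y_{12}X_{22}$ and $Y_{13}X_{32}$, each equal to $a$.
\item Together with $X_{12}=Y_{12}=a$, this gives $\mathbf u_{12}=a$.
\item The path $X_{13}X_{32}=a^{2}=\infty$ gives $(X^{2})_{12}=\infty$, so the right-hand side is $\infty$ at $(1,2)$.
\end{itemize}

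The main obstacle is finding this witness. \cref{lem:missing-position} does not apply, since $x$ occurs as both a first and a last letter in $\mathbf u$. The guiding principle is the layer picture of \cref{lem:layer-structure}: put the $a$ of $x$ on row $1$ at the first step and on column $2$ at the last step. Then $xy$ and $yx$ each see exactly one $a$ along every path, while $x^{2}$ can see two. The $2\times2$ verification, by contrast, is routine once the case split into diagonal and off-diagonal coordinates is made.
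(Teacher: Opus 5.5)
Your proof is correct and essentially the paper's: the same coordinatewise flat-sum check in $\M_{2}(S_{7})$ (you split into diagonal and off-diagonal coordinates where the paper splits by the value $1$ or $a$, which covers the same cases), and exactly the paper's witness in $\M_{3}(S_{7})$, namely $X_{12}=X_{13}=X_{32}=Y_{12}=a$ with all other entries $1$, evaluated at $(1,2)$.
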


\begin{proof}
Let $A,B\in\M_{2}(S_{7})$ and put
\[
 C=A+B+AB+BA.
\]
Fix a coordinate $(i,j)$.  If $C_{ij}=\infty$, then
$(C+A^{2})_{ij}=\infty$.

Suppose that $C_{ij}=1$.  By \cref{lem:flat-sum},
\[
 A_{ij}=B_{ij}=(AB)_{ij}=(BA)_{ij}=1.
\]
The equality $(AB)_{ij}=1$ forces every entry in row $i$ of $A$ to be $1$,
and $(BA)_{ij}=1$ forces every entry in column $j$ of $A$ to be $1$.
Therefore every summand in $(A^{2})_{ij}$ is $1$, and
$(A^{2})_{ij}=1$.

Suppose finally that $C_{ij}=a$.  Then
\begin{equation}\label{eq:a-four}
 A_{ij}=B_{ij}=(AB)_{ij}=(BA)_{ij}=a.
\end{equation}
We must have $i\neq j$, since otherwise the summand
$A_{ii}B_{ii}=a^{2}=\infty$ occurs in $(AB)_{ii}$.  Since there are only two
indices, they are precisely $i$ and $j$.  In $(AB)_{ij}=a$, the summand with
intermediate index $i$ gives
\[
 A_{ii}B_{ij}=A_{ii}a=a,
\]
so $A_{ii}=1$.  In $(BA)_{ij}=a$, the summand with intermediate index $j$
gives
\[
 B_{ij}A_{jj}=aA_{jj}=a,
\]
so $A_{jj}=1$.  Consequently,
\[
 (A^{2})_{ij}=A_{ii}A_{ij}+A_{ij}A_{jj}=a+a=a.
\]
Thus \eqref{eq:epsilon} holds at every coordinate of $\M_{2}(S_{7})$.

For failure in dimension three, take
\[
 A=\begin{pmatrix}
 1&a&a\\
 1&1&1\\
 1&a&1
 \end{pmatrix},
 \qquad
 B=\begin{pmatrix}
 1&a&1\\
 1&1&1\\
 1&1&1
 \end{pmatrix}.
\]
At coordinate $(1,2)$,
\[
 A_{12}=B_{12}=a,
\]
\[
 (AB)_{12}=1a+a1+a1=a,
 \qquad
 (BA)_{12}=1a+a1+1a=a,
\]
whereas
\[
 (A^{2})_{12}=1a+a1+aa=\infty.
\]
The left side of \eqref{eq:epsilon} therefore has entry $a$ at $(1,2)$ and
the right side has entry $\infty$.
\end{proof}

\begin{corollary}[Exact chain]\label{cor:exact-chain}
For every $n\geq3$,
\[
 \V(S_{7})<\V(\M_{2}(S_{7}))<\V(\M_{3}(S_{7}))
 =\V(\M_{n}(S_{7})).
\]
\end{corollary}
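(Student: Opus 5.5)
The corollary assembles three inclusions, each handled by a result already available, plus one strictness claim.

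\emph{The equality.} For $n\geq3$, the equality $\V(\M_{3}(S_{7}))=\V(\M_{n}(S_{7}))$ is \cref{thm:dimension-three}. A variety is determined by its equational theory, so equality of the identity sets gives equality of the generated varieties.

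\emph{The middle strict inequality.} By \cref{cor:dimension-chain}, $\M_{2}(S_{7})$ embeds into $\M_{3}(S_{7})$, so $\V(\M_{2}(S_{7}))\leq\V(\M_{3}(S_{7}))$. Strictness comes from \cref{thm:epsilon}: identity \eqref{eq:epsilon} lies in $\Id(\M_{2}(S_{7}))$ but not in $\Id(\M_{3}(S_{7}))$. Hence $\M_{3}(S_{7})\notin\V(\M_{2}(S_{7}))$.

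\emph{The first strict inequality.} The inclusion $\V(S_{7})\leq\V(\M_{2}(S_{7}))$ again follows from \cref{cor:dimension-chain}, via $s\mapsto[s]_{2}$. Strictness was stated in the introduction as known from \cite{JR}. To keep the argument self-contained, I would exhibit an identity valid in $S_{7}$ that fails in $\M_{2}(S_{7})$.

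The natural candidate is commutativity, $xy\approx yx$. It holds in $S_{7}$, since $S_{7}$ is commutative by its multiplication table. It fails in $\M_{2}(S_{7})$: for instance, take $A$ with $A_{11}=a$ and all other entries $1$, and $B=[1]_{2}$ except $B_{21}=a$, and compare $(AB)_{11}$ with $(BA)_{11}$. If that particular pair turns out not to separate, I would replace it with a pair of matrices supported on one off-diagonal entry. The shape to aim for is $\begin{pmatrix}1&a\\1&1\end{pmatrix}$ against a diagonal-type matrix, adjusted so that one product entry is $a$ and the corresponding entry of the other product is $1$ or $\infty$.

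Choosing this explicit noncommuting pair is the only step requiring any calculation, and it is routine. Everything else is a direct citation of \cref{cor:dimension-chain}, \cref{thm:dimension-three} and \cref{thm:epsilon}.
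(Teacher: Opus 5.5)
Your proposal is correct and follows the paper's proof exactly: the equality comes from \cref{thm:dimension-three}, the middle strict inequality from \cref{thm:epsilon} together with \cref{cor:dimension-chain}, and the first strict inequality from $xy\approx yx$. Your noncommuting pair already separates, so the fallback is unnecessary: $(AB)_{11}=a\cdot1+1\cdot a=a$, while $(BA)_{11}=1\cdot a+1\cdot1=\infty$ (the paper instead compares $[1]_{2}$ with the matrix having $a$ at position $(2,2)$ and $1$ elsewhere).
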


\begin{proof}
The second strict inequality follows from \cref{thm:epsilon} and the embedding
$\M_{2}(S_{7})\hookrightarrow\M_{3}(S_{7})$.  The equality follows from
\cref{thm:dimension-three}.

The multiplication of $S_{7}$ is commutative, whereas that of
$\M_{2}(S_{7})$ is not.  For example, with
\[
 X=\begin{pmatrix}1&1\\1&1\end{pmatrix},\qquad
 Y=\begin{pmatrix}1&1\\1&a\end{pmatrix},
\]
one has
\[
 XY=\begin{pmatrix}1&\infty\\1&\infty\end{pmatrix}
 \neq
 \begin{pmatrix}1&1\\\infty&\infty\end{pmatrix}=YX.
\]
Thus $xy\approx yx$ separates $\V(S_{7})$ from
$\V(\M_{2}(S_{7}))$.
\end{proof}

\section{A complete coordinate criterion for identities}\label{sec:identity-criterion}

We now combine \cref{thm:dimension-three} with the known solution of the
equational problem for $S_{7}$.

Let $\mathbf u$ be a term.  Write $c(\mathbf u)$ for the set of variables
occurring in $\mathbf u$.  For a word $w$ and a variable $x$, let
$\Occ(x,w)$ denote the number of occurrences of $x$ in $w$.  Define $\delta(\mathbf u)$ to be the set of all nonempty subsets
$Z\subseteq c(\mathbf u)$ such that, for every word $w$ occurring in
$\mathbf u$, one has
\[
 Z\cap c(w)=\{x\}
 \quad\text{and}\quad
 \Occ(x,w)=1
\]
for some variable $x$.  Thus each word of $\mathbf u$ contains exactly one
letter from $Z$, and that letter occurs exactly once in the word.

The following criterion is due to Jackson, Ren and Zhao
\cite[Proposition 5.5]{JRZ}; it is also recorded in
\cite[Lemma 1.2]{GJRZ}.

\begin{theorem}[Equational criterion for $S_{7}$]\label{thm:S7-criterion}
For ai-semiring terms $\mathbf u,\mathbf v$,
\[
 S_{7}\models\mathbf u\approx\mathbf v
\]
if and only if
\[
 c(\mathbf u)=c(\mathbf v),\qquad
 \delta(\mathbf u)=\delta(\mathbf v).
\]
\end{theorem}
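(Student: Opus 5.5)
The statement is a known result cited from \cite{JRZ}. Still, a self-contained proof can be given using only \cref{lem:flat-sum}. We evaluate a term $\mathbf u$ under an assignment $\varphi$ into $S_7$ and find exactly when the value is $1$ and when it is $a$. Flatness then shows that $\mathbf u$ and $\mathbf v$ agree under all assignments iff these two events coincide for the two terms.

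\emph{Value $1$.} By \cref{lem:flat-sum}, $\varphi(\mathbf u)=1$ iff every word of $\mathbf u$ evaluates to $1$. This happens iff $\varphi(x)=1$ for every $x\in c(\mathbf u)$.

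\emph{Value $a$.} Again by \cref{lem:flat-sum}, $\varphi(\mathbf u)=a$ iff every word $w$ of $\mathbf u$ has value $a$. Equivalently, every variable of $c(\mathbf u)$ takes a value in $\{1,a\}$, and each word contains exactly one occurrence of a variable sent to $a$. Put $Z=\varphi^{-1}(a)\cap c(\mathbf u)$. The condition says exactly that $Z\in\delta(\mathbf u)$; note that $Z$ is nonempty because words are nonempty.

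\emph{Sufficiency.} Assume $c(\mathbf u)=c(\mathbf v)$ and $\delta(\mathbf u)=\delta(\mathbf v)$, and fix $\varphi$. If some variable of the common content is sent to $\infty$, both sides are $\infty$. Otherwise $\varphi$ is determined on the content by $Z$. If $Z=\varnothing$, both sides are $1$. If $Z\neq\varnothing$, each side equals $a$ iff $Z$ lies in the common $\delta$, and is $\infty$ otherwise. In every case the two sides agree.

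\emph{Necessity.} Suppose first that $x\in c(\mathbf u)\setminus c(\mathbf v)$. Send $x$ to $\infty$ and every other variable to $1$; then $\mathbf u$ evaluates to $\infty$ while $\mathbf v$ evaluates to $1$. So the contents are equal. Now suppose $Z\in\delta(\mathbf u)\setminus\delta(\mathbf v)$. Send the variables of $Z$ to $a$ and all others to $1$; then $\mathbf u$ evaluates to $a$ and $\mathbf v$ does not. Hence $\delta(\mathbf u)=\delta(\mathbf v)$.

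The only delicate point is the characterization of the value $a$. A product equals $a$ iff exactly one factor is $a$, counted with multiplicity, and the rest are $1$. This is precisely the requirement that $\Occ(x,w)=1$ for the unique $x\in Z\cap c(w)$, so the translation into the condition $Z\in\delta(\mathbf u)$ is exact.
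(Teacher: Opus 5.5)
Your proof is correct. Note that the paper gives no proof of this statement at all: it imports the criterion from Jackson, Ren and Zhao \cite[Proposition 5.5]{JRZ}, also recorded in \cite[Lemma 1.2]{GJRZ}. So your proposal is a self-contained replacement for a citation rather than a variant of an argument in the text.

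Your route needs nothing beyond \cref{lem:flat-sum} and $a^{2}=\infty$. You classify an assignment by whether it sends some content variable to $\infty$, and otherwise by $Z=\varphi^{-1}(a)\cap c(\mathbf u)$. You then show that the value is $1$ iff $Z=\varnothing$, and is $a$ iff $Z\in\delta(\mathbf u)$. This is the same flatness mechanism the paper uses in its matrix arguments: a finite value forces every summand to take that value. The citation buys brevity and proper attribution. Your version buys a paper in which the coordinate criterion of \cref{thm:stable-identity-criterion} can be checked without outside sources.

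Three small presentational points:
\begin{enumerate}
\item ``Value $a$ iff $Z\in\delta(\mathbf u)$'' holds only together with the side condition $\varphi(c(\mathbf u))\subseteq\{1,a\}$. You do carry this condition correctly in the sufficiency step.
\item $Z\neq\varnothing$ holds because $\mathbf u$ has at least one word, and that word must contain an occurrence sent to $a$. It does not follow merely from words being nonempty.
\item The content step should be stated symmetrically, for $c(\mathbf u)\setminus c(\mathbf v)$ and for $c(\mathbf v)\setminus c(\mathbf u)$. You rightly establish equal content before the $\delta$ step, and this order is needed: it guarantees $\varphi^{-1}(a)\cap c(\mathbf v)=Z$.
\end{enumerate}
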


For each original variable $x$ and each $r,s\in[n]$, introduce a new variable
$x_{rs}$.  If $w=x_{1}\cdots x_{m}$ is a word, define its $(p,q)$-coordinate
expansion by
\begin{equation}\label{eq:coordinate-expansion-word}
 w_{pq}^{(n)}=
 \sum_{i_{1},\dots,i_{m-1}\in[n]}
 x_{1,p i_{1}}x_{2,i_{1}i_{2}}\cdots x_{m,i_{m-1}q}.
\end{equation}
For $m=1$, this means $w_{pq}^{(n)}=x_{pq}$.  If
$\mathbf u=u_{1}+\cdots+u_{r}$, put
\[
 \mathbf u_{pq}^{(n)}=(u_{1})_{pq}^{(n)}+\cdots+(u_{r})_{pq}^{(n)},
\]
with repetitions deleted.

\begin{proposition}\label{prop:coordinate-criterion-general}
Let $S$ be an ai-semiring.  Then
\[
 \M_{n}(S)\models\mathbf u\approx\mathbf v
\]
if and only if
\[
 S\models\mathbf u_{pq}^{(n)}\approx\mathbf v_{pq}^{(n)}
 \qquad\text{for all }p,q\in[n].
\]
\end{proposition}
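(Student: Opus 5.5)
The plan is to identify assignments of matrices with assignments of the new variables, and then read off the criterion entry by entry.

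\emph{Correspondence of assignments.} A map $\varphi$ sending each original variable $x$ to a matrix $X\in\M_{n}(S)$ is the same thing as a map $\psi$ sending each new variable $x_{rs}$ to the scalar $X_{rs}\in S$. This correspondence is a bijection between assignments into $\M_{n}(S)$ and assignments into $S$ of the variables $\{x_{rs}\}$. The only variables that matter are those occurring in $\mathbf u$, $\mathbf v$, or their coordinate expansions.

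\emph{Evaluating words.} For a word $w=x_{1}\cdots x_{m}$ I will show that the $(p,q)$-entry of $\varphi(w)$ equals $\psi\bigl(w^{(n)}_{pq}\bigr)$. This is exactly the path expansion \eqref{eq:path-expansion}, which holds in $\M_{n}(S)$ for any semiring $S$; it follows by induction on $m$ from the definition of the matrix product and distributivity. When $m=1$ both sides equal $X_{pq}$.

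\emph{Evaluating terms.} Matrix addition is entrywise, so the $(p,q)$-entry of $\varphi(\mathbf u)$ equals $\psi\bigl(\mathbf u^{(n)}_{pq}\bigr)$. Several words of $\mathbf u$ may contribute the same monomial to the expansion, and the definition deletes repetitions. This is harmless because addition is idempotent: the value in $S$ of a sum is unchanged by deleting repeated summands.

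\emph{Both directions.} Suppose $\M_{n}(S)\models\mathbf u\approx\mathbf v$, and let $\psi$ be any assignment of the $x_{rs}$ into $S$. Build the corresponding matrices $\varphi$. Then $\varphi(\mathbf u)=\varphi(\mathbf v)$, so their $(p,q)$-entries agree, which says $\psi\bigl(\mathbf u^{(n)}_{pq}\bigr)=\psi\bigl(\mathbf v^{(n)}_{pq}\bigr)$. Conversely, suppose every coordinate identity holds in $S$, and let $\varphi$ be any matrix assignment. Then all entries of $\varphi(\mathbf u)$ and $\varphi(\mathbf v)$ coincide, so $\varphi(\mathbf u)=\varphi(\mathbf v)$.

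\emph{Main obstacle.} There is no substantive obstacle. The only point needing care is that the coordinate terms are genuine ai-semiring terms: each is a finite nonempty set of nonempty words, with repetitions removed, and this removal is justified by idempotence as above. A minor check is that variables appearing on only one side still correspond correctly; this holds because $\psi$ is defined on all variables $x_{rs}$.
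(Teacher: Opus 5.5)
Your proposal is correct and follows the paper's proof essentially verbatim. Both use the bijection between matrix assignments and scalar assignments $x_{rs}\mapsto X_{rs}$, derive the path expansion \eqref{eq:path-expansion} by induction on words, and then pass to terms by entrywise addition. As a small aside, repeated monomials never actually arise, because a monomial $x_{1,pi_{1}}\cdots x_{m,i_{m-1}q}$ determines both the original word and the index sequence; your appeal to idempotence covers this case anyway.
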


\begin{proof}
Given a matrix assignment $x\mapsto X$, assign the scalar variable $x_{rs}$
the entry $X_{rs}$.  Formula \eqref{eq:path-expansion} shows, by induction on
words and then by addition, that the value of
$\mathbf u_{pq}^{(n)}$ is exactly the $(p,q)$ entry of the matrix value of
$\mathbf u$.  Conversely, an arbitrary scalar assignment to all variables
$x_{rs}$ uniquely determines a matrix assignment.  Thus no assignments are
lost in either direction.
\end{proof}

Simultaneous permutation of rows and columns is an automorphism of
$\M_{n}(S)$.  Consequently, all diagonal coordinate expansions are obtained
from one another by a renaming of variables, and the same is true of all
nondiagonal coordinate expansions.

\begin{theorem}[Stable identity criterion]\label{thm:stable-identity-criterion}
Let $n\geq3$.  Then
$\M_{n}(S_{7})\models\mathbf u\approx\mathbf v$ if and only if the following
four equalities hold:
\[
\begin{aligned}
 c(\mathbf u_{11}^{(3)})&=c(\mathbf v_{11}^{(3)}),&
 \delta(\mathbf u_{11}^{(3)})&=\delta(\mathbf v_{11}^{(3)}),\\
 c(\mathbf u_{12}^{(3)})&=c(\mathbf v_{12}^{(3)}),&
 \delta(\mathbf u_{12}^{(3)})&=\delta(\mathbf v_{12}^{(3)}).
\end{aligned}
\]
\end{theorem}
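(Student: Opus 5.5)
The plan is to chain together three earlier results.

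First, \cref{thm:dimension-three} reduces validity in $\M_{n}(S_{7})$ for $n\geq3$ to validity in $\M_{3}(S_{7})$.

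Second, \cref{prop:coordinate-criterion-general} with $S=S_{7}$ and $n=3$ says that $\M_{3}(S_{7})\models\mathbf u\approx\mathbf v$ holds if and only if
\[
 S_{7}\models\mathbf u_{pq}^{(3)}\approx\mathbf v_{pq}^{(3)}
\]
holds for all nine pairs $(p,q)\in[3]^{2}$.

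Third, \cref{thm:S7-criterion} converts each of these nine scalar identities into the two conditions $c(\mathbf u_{pq}^{(3)})=c(\mathbf v_{pq}^{(3)})$ and $\delta(\mathbf u_{pq}^{(3)})=\delta(\mathbf v_{pq}^{(3)})$.

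It then remains to cut the nine coordinates down to the two representatives $(1,1)$ and $(1,2)$. For this I will use the remark preceding the theorem. Take a permutation $\pi$ of $[3]$ with $\pi(1)=p$, and also $\pi(2)=q$ when $p\neq q$. Conjugation by the permutation matrix of $\pi$ is an automorphism of $\M_{3}(S)$. At the level of coordinate expansions, it corresponds to the variable renaming $x_{rs}\mapsto x_{\pi(r)\pi(s)}$, applied simultaneously to every original variable.

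I need to check that this renaming carries $\mathbf u_{11}^{(3)}$ onto $\mathbf u_{pp}^{(3)}$ and $\mathbf u_{12}^{(3)}$ onto $\mathbf u_{pq}^{(3)}$ as terms. The reason is that the map $(i_{1},\dots,i_{m-1})\mapsto(\pi(i_{1}),\dots,\pi(i_{m-1}))$ is a bijection of the index sequences in \eqref{eq:coordinate-expansion-word}. The same renaming is applied to $\mathbf v$.

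The renaming is a bijection of the variable set, and the content $c$ and the set $\delta$ are defined purely combinatorially. Therefore both are transported along the renaming. Concretely, $c$ is mapped by the renaming, and $\delta$ is mapped elementwise to images of subsets. Hence equality of $c$ and of $\delta$ for the pair $(\mathbf u_{11}^{(3)},\mathbf v_{11}^{(3)})$ is equivalent to the same equalities for $(\mathbf u_{pp}^{(3)},\mathbf v_{pp}^{(3)})$, and likewise for the off-diagonal pairs.

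The only point needing care, and so the main obstacle, is to confirm that the renaming is a single injective substitution applied uniformly to both sides. Otherwise equality of $\delta$-sets would not transfer. This holds because $\pi$ is fixed and does not depend on the word.

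An alternative that avoids the combinatorics is to note that $S_{7}$-validity of scalar identities is invariant under bijective renaming of variables. That fact alone gives the equivalence of the scalar identities at $(p,q)$ and at the representative coordinate. Applying \cref{thm:S7-criterion} only at $(1,1)$ and $(1,2)$ then yields the four stated equalities, in both directions.
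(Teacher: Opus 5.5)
Your proposal is correct and follows the paper's proof: you reduce to $\M_{3}(S_{7})$ via \cref{thm:dimension-three}, pass to the nine scalar coordinate identities via \cref{prop:coordinate-criterion-general}, collapse them to the representatives $(1,1)$ and $(1,2)$ by simultaneous index permutation, and apply \cref{thm:S7-criterion}. Your explicit check that the renaming $x_{rs}\mapsto x_{\pi(r)\pi(s)}$ carries $\mathbf u_{11}^{(3)}$ and $\mathbf u_{12}^{(3)}$ onto $\mathbf u_{pp}^{(3)}$ and $\mathbf u_{pq}^{(3)}$ supplies the detail that the paper leaves to the remark preceding the theorem.
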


\begin{proof}
By \cref{thm:dimension-three}, it is enough to decide the identity in
$\M_{3}(S_{7})$.  By \cref{prop:coordinate-criterion-general}, this is
equivalent to validity in $S_{7}$ of all nine coordinate identities.  By
simultaneous permutation of the three indices, there are only two variable-
renaming types: a diagonal coordinate, represented by $(1,1)$, and a
nondiagonal coordinate, represented by $(1,2)$.  Apply
\cref{thm:S7-criterion} to these two scalar identities.
\end{proof}

\begin{remark}
\Cref{thm:stable-identity-criterion} gives a finite combinatorial decision
procedure for the identities of every $\M_{n}(S_{7})$ with $n\geq3$.  The
matrix dimension does not occur in the test after dimension three.
\end{remark}

\begin{corollary}\label{cor:uniform-decidability}
The equational theories of the matrix semirings $\M_{n}(S_{7})$, $n\geq3$,
are identical and admit a uniform effective decision procedure.
\end{corollary}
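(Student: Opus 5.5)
The plan is to reduce everything to \cref{thm:stable-identity-criterion} and then check that the data it requires can be computed in finite time. The first assertion of \cref{cor:uniform-decidability}, that the theories coincide, is exactly \cref{thm:dimension-three}. For any $n\geq3$, $\Id(\M_{n}(S_{7}))=\Id(\M_{3}(S_{7}))$, so all these theories equal one fixed set. A single decision procedure for $\Id(\M_{3}(S_{7}))$ therefore serves every $n\geq3$ at once, and it does not need $n$ as input.

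For the procedure itself, take as input two ai-semiring terms $\mathbf u,\mathbf v$, written as finite sets of words. First compute the coordinate expansions $\mathbf u_{11}^{(3)}$, $\mathbf u_{12}^{(3)}$, $\mathbf v_{11}^{(3)}$ and $\mathbf v_{12}^{(3)}$ from \eqref{eq:coordinate-expansion-word}. A word of length $m$ expands into at most $3^{m-1}$ words over the finite alphabet $\{x_{rs}\}$, so each expansion is a finite term that can be listed explicitly. Next compute the content $c(\cdot)$ of each expanded term by reading off its letters. Then compute $\delta(\cdot)$ by enumerating all nonempty subsets $Z$ of the content, which is a finite set. For each such $Z$, check the defining condition word by word: the word must meet $Z$ in exactly one letter, and that letter must occur exactly once. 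Both checks are finite counting operations. Finally, compare the four pairs of sets. By \cref{thm:stable-identity-criterion}, the identity holds in $\M_{n}(S_{7})$ precisely when all four equalities hold, so the procedure is correct.

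There is no real obstacle here, since the corollary is a direct consequence of earlier results. The only points that need care are these:

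\begin{enumerate}
\item The procedure must halt. This holds because every object involved is finite. The running time is exponential in the length of the words and in the size of the content, but that does not affect decidability.
\item It must not depend on $n$. This holds because the test works only with dimension-three expansions, as the preceding remark already notes.
\end{enumerate}
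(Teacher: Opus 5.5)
Your proposal is correct and follows essentially the same route as the paper. The identical theories come from \cref{thm:dimension-three}, and decidability comes from \cref{thm:stable-identity-criterion} together with the observation that the two dimension-three coordinate expansions, their contents, and their $\delta$-families are finite and computable by exhaustive search; you just make the enumeration (at most $3^{m-1}$ expanded words per word of length $m$, and a finite search over nonempty subsets $Z$ of the content) more explicit than the paper's short proof does.
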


\begin{proof}
For a given identity, the two third-order coordinate expansions are finite
terms.  Their content sets and their $\delta$-families can be computed by a
finite search.  The conclusion therefore follows from
\cref{thm:stable-identity-criterion}.
\end{proof}

\section{Graph semirings inside \texorpdfstring{$\V(\M_{2}(S_{7}))$}{V(M2(S7))}}\label{sec:graph-semirings}

We recall the graph semirings of Gao and Ren \cite{GR}.  Let
$\mathbb G=(V,E)$ be a directed graph with no isolated vertices and with every
in-degree and out-degree at most one.  Let
\[
 S_{\mathbb G}=V\mathbin{\dot\cup}\{0,\omega\}.
\]
Its addition is flat, so the sum of two distinct elements is $0$, and its
multiplication is defined by
\[
 uv=\begin{cases}
 \omega,&(u,v)\in E,\\
 0,&(u,v)\notin E,
 \end{cases}
\]
for vertices $u,v$, while every product involving $0$ or $\omega$ is $0$.

Since in-degree and out-degree are at most one, each vertex has a partial
successor $s(u)$ and a partial predecessor $p(u)$.

\subsection{A two-by-two coding calculation}

For $b\in\{0,1\}$, put
\[
 e(b)=\begin{cases}1,&b=0,\\a,&b=1.\end{cases}
\]

\begin{lemma}\label{lem:bit-product}
Let $A,B\in\M_{2}(S_{7})$ satisfy $A_{21}=B_{21}=\infty$, and suppose
\[
 (A_{11},A_{12})=(e(r_{1}),e(r_{2})),\qquad
 (B_{12},B_{22})=(e(q_{1}),e(q_{2}))
\]
with $r_{1},r_{2},q_{1},q_{2}\in\{0,1\}$.  Then
\[
 (AB)_{12}=a
 \quad\Longleftrightarrow\quad
 (q_{1},q_{2})=(1-r_{1},1-r_{2}).
\]
Moreover, $(AB)_{12}=1$ if and only if all four bits are zero; in every other
case $(AB)_{12}=\infty$.
\end{lemma}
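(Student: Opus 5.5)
The plan is to expand the single coordinate $(AB)_{12}$ directly by \eqref{eq:path-expansion} and then read off its value with \cref{lem:flat-sum}. In dimension two,
\[
 (AB)_{12}=A_{11}B_{12}+A_{12}B_{22}=e(r_{1})e(q_{1})+e(r_{2})e(q_{2}).
\]
The hypothesis $A_{21}=B_{21}=\infty$ is not actually needed for this coordinate, since the two index sequences for $(1,2)$ pass only through $A_{11},B_{12}$ and $A_{12},B_{22}$. It only matters later, when the lemma is used to control the other coordinates of such products.

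Next I would compute each summand. Because $a^{2}=\infty$, the value of $e(r)e(q)$ is determined by $r+q$: it equals $1$ when $r+q=0$, equals $a$ when $r+q=1$, and equals $\infty$ when $r+q=2$. This is exactly the product part of \cref{lem:flat-sum}.

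By \cref{lem:flat-sum}, the sum of the two summands is finite only if both summands are finite and equal. So $(AB)_{12}=a$ holds precisely when $r_{1}+q_{1}=1$ and $r_{2}+q_{2}=1$, that is, when $q_{k}=1-r_{k}$ for $k=1,2$. Likewise, $(AB)_{12}=1$ holds precisely when $r_{1}+q_{1}=r_{2}+q_{2}=0$, which means all four bits are zero.

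In every remaining case either some summand is $\infty$, or the two summands are the distinct finite values $1$ and $a$. In both situations the sum is $\infty$ by flatness, which gives the final clause of the lemma.

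There is no real obstacle here. The only step needing care is noting that the intermediate index ranges over both $1$ and $2$, so exactly two summands appear and no others.
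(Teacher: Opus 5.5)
Your proof is correct and matches the paper's argument: both expand $(AB)_{12}=e(r_{1})e(q_{1})+e(r_{2})e(q_{2})$, classify each summand by how many of its two bits equal $1$, and use flatness to read off the value of the sum. Your remark that the hypothesis $A_{21}=B_{21}=\infty$ is not needed for this coordinate is also accurate; the paper uses it only later, in \cref{thm:graph-divisor}, to force the other entries of the product to be $\infty$.
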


\begin{proof}
We have
\[
 (AB)_{12}=e(r_{1})e(q_{1})+e(r_{2})e(q_{2}).
\]
A product $e(r)e(q)$ is $1$ when $(r,q)=(0,0)$, is $a$ when exactly one of
$r,q$ is $1$, and is $\infty$ when $(r,q)=(1,1)$.  The sum is $a$ precisely
when both products are $a$, which is exactly the coordinatewise complement
condition.  The assertion about the value $1$ is equally immediate, and
flatness gives $\infty$ in all remaining cases.
\end{proof}

\subsection{The divisor construction}

Let
\[
 \Lambda=\{0,1\}^{V}
\]
be the set of all binary labellings of the vertices.  For
$\lambda\in\Lambda$ and $u\in V$, define
\[
 \alpha_{\lambda}(u)=
 \begin{cases}
  1-\lambda(s(u)),&s(u)\text{ is defined},\\
  1,&s(u)\text{ is undefined},
 \end{cases}
\]
and
\[
 \gamma_{\lambda}(u)=
 \begin{cases}
  1-\lambda(p(u)),&p(u)\text{ is defined},\\
  1,&p(u)\text{ is undefined}.
 \end{cases}
\]
Put
\begin{equation}\label{eq:Xulambda}
 X_{u}^{\lambda}=
 \begin{pmatrix}
  e(\alpha_{\lambda}(u))&e(\lambda(u))\\
  \infty&e(\gamma_{\lambda}(u))
 \end{pmatrix}
 \in\M_{2}(S_{7}),
\end{equation}
and define the tuple
\[
 X_{u}=(X_{u}^{\lambda})_{\lambda\in\Lambda}
 \in\M_{2}(S_{7})^{\Lambda}.
\]
Finally, put
\[
 \Omega=\begin{pmatrix}\infty&a\\\infty&\infty\end{pmatrix},
 \qquad
 \boldsymbol\Omega=(\Omega)_{\lambda\in\Lambda}.
\]

\begin{theorem}\label{thm:graph-divisor}
For every directed graph $\mathbb G$ as above,
\[
 S_{\mathbb G}\in\V(\M_{2}(S_{7})).
\]
More precisely, $S_{\mathbb G}$ is a homomorphic image of a subsemiring of
$\M_{2}(S_{7})^{\Lambda}$.
\end{theorem}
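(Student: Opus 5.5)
The plan is to take $T$ to be the subsemiring of $\M_{2}(S_{7})^{\Lambda}$ generated by the tuples $X_{u}$ ($u\in V$) together with $\boldsymbol\Omega$. I will then define an explicit surjection $\varphi:T\to S_{\mathbb G}$ and check that it is a homomorphism; its kernel is the required congruence. Every generator is upper triangular with $(2,1)$-entry $\infty$, and this property is preserved by sums and products, so every component of every element of $T$ has $(2,1)$-entry $\infty$. This allows \cref{lem:bit-product} to be applied to products. Define
\[
 \varphi(t)=\begin{cases}
 u,& t=X_{u},\\
 \omega,& t^{\lambda}_{12}=a\ \text{for every }\lambda\in\Lambda,\\
 0,&\text{otherwise}.
 \end{cases}
\]
This is well defined: the $(1,2)$-entry of $X_{u}^{\lambda}$ is $e(\lambda(u))$, which equals $1$ when $\lambda(u)=0$, so no $X_{u}$ is of $\omega$-type. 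The map is also injective on vertices, since $X_u$ and $X_v$ have different $(1,2)$-patterns when $u\neq v$.

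\textbf{Additivity.} I would check that $\varphi$ preserves addition by a short case analysis based on \cref{lem:flat-sum}.
\begin{enumerate}[label=\textup{(\alph*)}]
\item If $u\neq v$, then $X_{u}+X_{v}$ has $(1,2)$-entry $e(\lambda(u))+e(\lambda(v))$, which is not $a$ when $\lambda(u)=0$. So the sum is sent to $0=u+v$.
\item If $t$ is of $\omega$-type, then $X_{u}+t$ has entry $1+a=\infty$ at any $\lambda$ with $\lambda(u)=0$, so it is sent to $0$.
\item Two $\omega$-type elements sum to an $\omega$-type element, because $a+a=a$.
\item If $t^{\lambda}_{12}\neq a$ for some $\lambda$, then $(t+s)^{\lambda}_{12}\neq a$ for every $s$, since $1+a=\infty$ and $\infty$ absorbs. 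Hence $0$-type elements absorb everything.
\end{enumerate}

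\textbf{Products of two vertices.} By \cref{lem:bit-product},
\[
 (X_{u}^{\lambda}X_{v}^{\lambda})_{12}=e(\alpha_{\lambda}(u))e(\lambda(v))+e(\lambda(u))e(\gamma_{\lambda}(v)).
\]
If $(u,v)\in E$, then $s(u)=v$ and $p(v)=u$. Hence $\alpha_\lambda(u)=1-\lambda(v)$ and $\gamma_\lambda(v)=1-\lambda(u)$, so this entry is $a$ for every $\lambda$ and $X_uX_v$ is of $\omega$-type. If $(u,v)\notin E$, I will choose $\lambda$ so that the complement condition of \cref{lem:bit-product} fails.
\begin{enumerate}[label=\textup{(\alph*)}]
\item If $s(u)$ is undefined, then $\alpha_\lambda(u)=1$, and any $\lambda$ with $\lambda(v)=1$ gives $\infty$.
\item If $s(u)=w\neq v$, take $\lambda(w)=\lambda(v)$.
\end{enumerate}
So $X_uX_v$ is sent to $0$, as required. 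The same lemma also gives $\varphi(\boldsymbol\Omega)=\omega$, so $\varphi$ is onto: $\boldsymbol\Omega$ lies in $T$, and in any case some edge exists because $\mathbb G$ has no isolated vertices.

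\textbf{Main obstacle.} The hard step is to show that every product with at least three factors, and every product in which an $\omega$-type or $0$-type element is a factor, is of $0$-type. Since $T$ consists of sums of words in the generators, and sums were handled above, it suffices to treat products $t\,s$ where $t$ is of $\omega$-type or $0$-type (and dually $s\,t$), together with words $X_{u_{1}}X_{u_{2}}X_{u_{3}}$.

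For $t$ of $\omega$-type, $(ts)^{\lambda}_{12}=t_{11}s_{12}+a\,s_{22}$. So it suffices to find a single $\lambda$ at which this is not $a$. I would first try to show that every $\omega$-type element has $t_{11}^\lambda\neq 1$ or $t_{22}^\lambda\neq1$ for suitable $\lambda$; for example, $\Omega$ has infinite diagonal, and $X_uX_v$ has diagonal entries $e(\alpha(u))e(\alpha(v))$ and $e(\gamma(u))e(\gamma(v))$. Then I would pick $\lambda$ making $s^\lambda_{22}=a$, or making $t_{11}s_{12}$ disagree with $a$.

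For a triple word, the $(1,2)$-entry is the sum of the three switching terms
\[
 e(\lambda(u_1))e(\gamma(u_2))e(\gamma(u_3)),\qquad
 e(\alpha(u_1))e(\lambda(u_2))e(\gamma(u_3)),\qquad
 e(\alpha(u_1))e(\alpha(u_2))e(\lambda(u_3)).
\]
If $u_1u_2$ is not an edge, some $\lambda$ already spoils $(X_{u_1}X_{u_2})_{12}$, and I expect this to propagate. If $u_1\to u_2\to u_3$ is a path, then for every $\lambda$ exactly one of $\lambda(u_2)$ and $\gamma(u_2)=1-\lambda(u_1)$ can be arranged to force an $a$ appearing twice in some switching term, or forcing two different finite summands. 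I expect to choose $\lambda(u_1)=0$ and $\lambda(u_2)=1$, which gives $\alpha(u_1)=0$, and then argue that the first two terms are $e(\gamma(u_2))e(\gamma(u_3))$ and $a\,e(\gamma(u_3))$, with $\gamma(u_2)=1$. That yields $a\cdot e(\gamma(u_3))+a\,e(\gamma(u_3))$, and I would then use $u_3$'s label to push the third term to $\infty$ or to $1$. Making this choice of $\lambda$ uniform is where I expect to spend the effort.

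Once multiplicativity is established, $\varphi$ is a surjective homomorphism from $T\leq\M_{2}(S_{7})^{\Lambda}$ onto $S_{\mathbb G}$, so $S_{\mathbb G}\in\V(\M_{2}(S_{7}))$.
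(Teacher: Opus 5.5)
\textbf{There is a genuine gap.} Your architecture matches the paper's: take the subsemiring $T$ generated by the $X_u$ and $\boldsymbol\Omega$, keep $X_u$ and $\boldsymbol\Omega$, and send everything else to $0$. However, the multiplicative half of the verification is left open, and the route you sketch for it rests on a miscalculation.

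\textbf{The missing observation.} In $S_7$ the element $\infty$ is absorbing for addition as well as for multiplication. So an index sequence passing through a $(2,1)$-entry does not drop out of a sum; it makes the sum $\infty$. For any $Y,Z$ with $Y_{21}=Z_{21}=\infty$,
\[
(YZ)_{11}=Y_{11}Z_{11}+Y_{12}Z_{21}=\infty,\qquad (YZ)_{22}=Y_{21}Z_{12}+Y_{22}Z_{22}=\infty,
\]
and $(YZ)_{21}=\infty$. Only $(YZ)_{12}=Y_{11}Z_{12}+Y_{12}Z_{22}$ can be finite. This has three consequences.
\begin{enumerate}
\item For an edge, $X_uX_v$ is literally equal to $\boldsymbol\Omega$. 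Hence $\boldsymbol\Omega$ is the only element of $T$ all of whose $(1,2)$-entries are $a$.
\item $\boldsymbol\Omega Y$ and $Y\boldsymbol\Omega$ are the all-$\infty$ tuple, because $\Omega_{11}=\Omega_{22}=\infty$.
\item Every product of three elements is all-$\infty$, since $(YZW)_{12}=(YZ)_{11}W_{12}+(YZ)_{12}W_{22}=\infty$.
\end{enumerate}
Your three ``switching terms'' omit the index sequence $1,2,1,2$, which contributes $X_{12}\cdot\infty\cdot Z_{12}=\infty$. Your diagonal entries $e(\alpha_\lambda(u))e(\alpha_\lambda(v))$ and $e(\gamma_\lambda(u))e(\gamma_\lambda(v))$ for $X_uX_v$ are wrong for the same reason; both are $\infty$.

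Even within your own formula the plan would not close. Along a path $u_1\to u_2\to u_3$, the choice $\lambda(u_1)=0$, $\lambda(u_2)=1$ makes all three terms equal to $a$. The third term is $e(\alpha_\lambda(u_1))e(\alpha_\lambda(u_2))e(\lambda(u_3))=e(1-\lambda(u_3))e(\lambda(u_3))=a$, whatever $\lambda(u_3)$ is.

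\textbf{The non-edge case $s(u)=w\neq v$.} The first summand of $(X_u^\lambda X_v^\lambda)_{12}$ is $e(1-\lambda(w))e(\lambda(v))$. Your choice $\lambda(w)=\lambda(v)$ makes this summand $a$, so it satisfies the complement condition rather than breaking it. Take instead $\lambda(w)=0$ and $\lambda(v)=1$. This gives $a\cdot a=\infty$, and it is consistent because $w\neq v$.

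\textbf{The $0$-class.} You define $0$-type as ``otherwise''. Step (d) then only shows that such sums are not of $\omega$-type. It does not exclude $t+s=X_w$, and nothing shows that $0$-type elements absorb products.

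The clean fix is what the paper does. Let $J$ be the set of elements having $(1,2)$-entry $\infty$ in some coordinate. Then:
\begin{enumerate}
\item $J$ absorbs addition.
\item $J$ is a two-sided ideal, because $Z_{12}$ is a factor of the first summand of $(YZ)_{12}=Y_{11}Z_{12}+Y_{12}Z_{22}$ and $Y_{12}$ is a factor of the second.
\end{enumerate}
The observation above then shows that every element of $T$ outside $J$ is some $X_u$ or $\boldsymbol\Omega$:
\begin{enumerate}
\item words of length two are $\boldsymbol\Omega$ or lie in $J$;
\item longer words are all-$\infty$;
\item two distinct generators sum into $J$.
\end{enumerate}
Collapsing $J$ therefore yields $S_{\mathbb G}$.
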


\begin{proof}
Let $H$ be the subsemiring of $\M_{2}(S_{7})^{\Lambda}$ generated by
\[
 \{X_{u}:u\in V\}\cup\{\boldsymbol\Omega\}.
\]

First suppose that $(u,v)\in E$.  Then $s(u)=v$ and $p(v)=u$.  For every
$\lambda\in\Lambda$, the bit pair in the first row of $X_{u}^{\lambda}$ is
\[
 (1-\lambda(v),\lambda(u)),
\]
while the bit pair in the second column of $X_{v}^{\lambda}$ is
\[
 (\lambda(v),1-\lambda(u)).
\]
These pairs are coordinatewise complements.  By
\cref{lem:bit-product}, the $(1,2)$ entry of
$X_{u}^{\lambda}X_{v}^{\lambda}$ is $a$.  All other entries of the product
are $\infty$, because both matrices have lower-left entry $\infty$.  Hence
\begin{equation}\label{eq:edge-product}
 X_{u}X_{v}=\boldsymbol\Omega\qquad((u,v)\in E).
\end{equation}

Now suppose that $(u,v)\notin E$.  We show that for some labelling $\lambda$,
\[
 (X_{u}^{\lambda}X_{v}^{\lambda})_{12}=\infty.
\]
There are three cases.

If $u$ has no successor, choose $\lambda(u)=\lambda(v)=1$.  The first row of
$X_{u}^{\lambda}$ is then $(a,a)$, and the upper entry in the second column of
$X_{v}^{\lambda}$ is $a$.  The first summand in the $(1,2)$ entry is
$a^{2}=\infty$.

If $s(u)=u$, then $v\neq u$.  Choose $\lambda(u)=1$ and $\lambda(v)=0$.
The first-row bit pair for $X_{u}^{\lambda}$ is $(0,1)$, whereas the first bit
in the second-column pair for $X_{v}^{\lambda}$ is $0$.  Thus the two pairs
are not complementary.  One summand in the $(1,2)$ entry is $1$; the other is
either $a$ or $\infty$.  In both cases their sum is $\infty$.

Finally, suppose $s(u)=w\neq u$.  Since $(u,v)$ is not an edge, $v\neq w$.
Choose
\[
 \lambda(u)=1,\qquad \lambda(w)=0,\qquad \lambda(v)=1.
\]
These prescriptions are consistent even when $v=u$.  The first row of
$X_{u}^{\lambda}$ is $(a,a)$, and the upper entry in the second column of
$X_{v}^{\lambda}$ is $a$, so again the first summand is $\infty$.

Define\[
 J=\{Y=(Y^{\lambda})_{\lambda\in\Lambda}\in H:
       (Y^{\lambda})_{12}=\infty\text{ for some }\lambda\in\Lambda\}.
\]
Every generator, and hence every element of $H$, has lower-left entry
$\infty$ in every coordinate.  If $Y\in J$ and $Z\in H$, then at a coordinate
where $Y_{12}^{\lambda}=\infty$ one has
\[
 (Y^{\lambda}+Z^{\lambda})_{12}=\infty,
\]
\[
 (Y^{\lambda}Z^{\lambda})_{12}=\infty,
 \qquad
 (Z^{\lambda}Y^{\lambda})_{12}=\infty.
\]
Thus $J$ is closed upward under addition and is a two-sided multiplicative
ideal.  The equivalence relation that identifies all elements of $J$ and
leaves all other elements as singleton classes is therefore a semiring
congruence; denote it by $\theta$.

The preceding calculations give
\[
 X_{u}X_{v}=\begin{cases}
 \boldsymbol\Omega,&(u,v)\in E,\\
 \text{an element of }J,&(u,v)\notin E.
 \end{cases}
\]
Furthermore,
\[
 \boldsymbol\Omega X_{u},\quad X_{u}\boldsymbol\Omega,
 \quad\boldsymbol\Omega^{2}\in J.
\]
If $u\neq v$, choose a labelling with $\lambda(u)=0$ and $\lambda(v)=1$.
Then the $(1,2)$ entries of $X_{u}^{\lambda}$ and $X_{v}^{\lambda}$ are
$1$ and $a$, so $X_{u}+X_{v}\in J$.  Likewise,
$X_{u}+\boldsymbol\Omega\in J$ by choosing $\lambda(u)=0$.

It remains only to check that the quotient has no further classes.  By
distributivity, every element of $H$ is a finite sum of multiplicative words
in the generators.  A word of length one is an $X_{u}$ or
$\boldsymbol\Omega$.  A word of length two is either
$\boldsymbol\Omega$ or belongs to $J$, and every word of length at least three
belongs to $J$.  A sum outside $J$ can therefore contain only repeated copies
of one and the same element among the $X_{u}$ and
$\boldsymbol\Omega$; any two distinct such elements have sum in $J$.
Consequently,
\[
 H/\theta=\{J,\boldsymbol\Omega/\theta,X_{u}/\theta:u\in V\}.
\]
The map
\[
 0\mapsto J,\qquad
 \omega\mapsto\boldsymbol\Omega/\theta,\qquad
 u\mapsto X_{u}/\theta
\]
is now visibly an isomorphism $S_{\mathbb G}\to H/\theta$.
\end{proof}

Gao and Ren proved that every nontrivial subdirectly irreducible
$3$-nilpotent flat semiring is a graph semiring
\cite[Theorem 2.1]{GR}.  Since $\NF_{3}$ is a subvariety of the variety of
flat semirings and its subdirectly irreducible members are $3$-nilpotent, the
subdirect representation theorem gives the following consequence.

\begin{corollary}\label{cor:NF3}
\[
 \NF_{3}\leq\V(\M_{2}(S_{7})).
\]
\end{corollary}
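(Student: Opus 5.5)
The corollary follows from \cref{thm:graph-divisor} together with Birkhoff's subdirect representation theorem. Since $\NF_{3}$ is a variety, it is generated by its subdirectly irreducible members. So it suffices to show that every subdirectly irreducible $S\in\NF_{3}$ lies in $\V(\M_{2}(S_{7}))$. Trivial algebras lie in every variety, so we may assume $S$ is nontrivial.

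The first step identifies such an $S$ as a graph semiring. Following the remark preceding the corollary, $\NF_{3}$ is generated by flat semirings that are $3$-nilpotent. I will argue that its subdirectly irreducible members are themselves $3$-nilpotent flat semirings. The $3$-nilpotency part is an identity holding in the generators, for instance $xyz\approx x_{1}y_{1}z_{1}$ or the paper's formulation of nilpotency, so it is inherited by all of $\NF_{3}$. The flatness part rests on the fact, which I take as established in the flat-semiring literature cited, that subdirectly irreducible members of the variety generated by flat semirings are flat. By \cite[Theorem 2.1]{GR}, $S$ is then isomorphic to a graph semiring $S_{\mathbb G}$. Here $\mathbb G$ is a directed graph without isolated vertices and with all in-degrees and out-degrees at most one. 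We may also assume $\mathbb G$ has at least one vertex, since otherwise $S$ has at most two elements and is degenerate. If that degenerate case arises, it can be handled directly, because it is a subalgebra-level quotient of the empty-graph construction or is trivial.

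The second step applies \cref{thm:graph-divisor} to this $\mathbb G$. It gives $S_{\mathbb G}\in H S P(\M_{2}(S_{7}))=\V(\M_{2}(S_{7}))$. Hence every subdirectly irreducible member of $\NF_{3}$ lies in $\V(\M_{2}(S_{7}))$.

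Since each member of $\NF_{3}$ is a subdirect product of subdirectly irreducible members of $\NF_{3}$, every member of $\NF_{3}$ is in $\V(\M_{2}(S_{7}))$, and so $\NF_{3}\leq\V(\M_{2}(S_{7}))$. No real obstacle is expected. The only delicate point is the justification that subdirectly irreducible members of $\NF_{3}$ fall under the hypotheses of \cite[Theorem 2.1]{GR}, and I intend to cite that directly as Gao and Ren's characterization of the subdirectly irreducible members of $\NF_{3}$.
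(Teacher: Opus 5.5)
Your proposal is correct and follows the paper's proof: subdirect representation of $\NF_{3}$, identification of its nontrivial subdirectly irreducible members as graph semirings via \cite[Theorem 2.1]{GR}, then \cref{thm:graph-divisor} together with closure under subalgebras and direct products. Your separate handling of the empty graph is unnecessary, since \cref{thm:graph-divisor} applies vacuously to it; in any case, the two-element null semiring is isomorphic to the subsemiring $\{a,\infty\}$ of $S_{7}$.
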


\begin{proof}
By the subdirect representation theorem (see, for example, \cite{BS}), every
algebra in $\NF_{3}$ is a subdirect product of subdirectly irreducible
members of $\NF_{3}$.  Each nontrivial factor is a graph semiring by
\cite[Theorem 2.1]{GR}, and every graph semiring lies in
$\V(\M_{2}(S_{7}))$ by \cref{thm:graph-divisor}.  The trivial factor also lies
there.  Closure under direct products and subalgebras completes the proof.
\end{proof}

\section{Continuum many intermediate varieties}\label{sec:continuum}

For $m\geq2$, define the directed-cycle term
\[
 \mathbf c_{m}=x_{1}x_{2}+x_{2}x_{3}+\cdots+x_{m-1}x_{m}+x_{m}x_{1}
\]
and its reverse
\[
 \mathbf c_{m}^{\op}
 =x_{2}x_{1}+x_{3}x_{2}+\cdots+x_{m}x_{m-1}+x_{1}x_{m}.
\]
Let $S_{\mathbf c_{q}}$ be the graph semiring of the directed $q$-cycle
\[
 a_{1}\to a_{2}\to\cdots\to a_{q}\to a_{1}.
\]

\begin{lemma}\label{lem:cycle-reversal}
Let $q\geq3$ and $m\geq2$.  Then
\[
 S_{\mathbf c_{q}}\models
 \mathbf c_{m}\approx\mathbf c_{m}^{\op}
 \quad\Longleftrightarrow\quad q\nmid m.
\]
\end{lemma}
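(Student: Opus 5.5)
The plan is to compute the values of both sides of $\mathbf c_{m}\approx\mathbf c_{m}^{\op}$ directly in $S_{\mathbf c_{q}}$, using only the flat addition and the definition of the multiplication. No earlier result of the paper is needed.

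\emph{Step 1: the value of a sum of quadratic words.} Any product of two elements of $S_{\mathbf c_{q}}$ lies in $\{0,\omega\}$: a product of two vertices is $\omega$ or $0$, and any product involving $0$ or $\omega$ is $0$. Flatness of addition then gives the following. A sum of such products equals $\omega$ precisely when every summand equals $\omega$; otherwise it equals $0$. Consequently, for any assignment $x_{i}\mapsto b_{i}$, we get $\mathbf c_{m}=\omega$ if and only if every $b_{i}$ is a vertex and $(b_{i},b_{i+1})\in E$ for all $i$, with indices taken modulo $m$. Otherwise $\mathbf c_{m}=0$. Likewise $\mathbf c_{m}^{\op}=\omega$ if and only if every $b_{i}$ is a vertex and $(b_{i+1},b_{i})\in E$ for all $i$.

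\emph{Step 2: the case $q\nmid m$.} The condition for $\mathbf c_{m}=\omega$ says that $b_{i+1}=s(b_{i})$ for all $i$, so $b_{1},\dots,b_{m},b_{1}$ is a closed walk of length $m$ in the directed $q$-cycle. Writing $b_{i}=a_{k_{i}}$, the condition reads $k_{i+1}\equiv k_{i}+1\pmod q$. Going once around gives $k_{1}\equiv k_{1}+m\pmod q$, so $q\mid m$. Symmetrically, $\mathbf c_{m}^{\op}=\omega$ is a closed backward walk of length $m$ and also forces $q\mid m$. Hence if $q\nmid m$, both sides equal $0$ under every assignment, and the identity holds.

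\emph{Step 3: the case $q\mid m$.} Assign $x_{i}\mapsto a_{k}$ with $k\equiv i\pmod q$. Since $q\mid m$, this is consistent at the wrap-around $x_{m}x_{1}$, and by Step 1 we get $\mathbf c_{m}=\omega$. For $\mathbf c_{m}^{\op}$ we would need $(a_{i+1},a_{i})\in E$. This requires $i\equiv i+2\pmod q$, that is $q\mid 2$, which is impossible because $q\geq3$. So $\mathbf c_{m}^{\op}=0\neq\omega$, and the identity fails.

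The only point needing care, and the mild obstacle, is exactly where $q\geq3$ enters: for $q=2$ the forward and reverse cycles coincide, so the failure in Step 3 depends on ruling out $q\mid2$. Everything else is routine bookkeeping with flatness.
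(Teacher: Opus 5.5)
Your proof is correct and follows the paper's argument: flatness reduces $\mathbf c_{m}=\omega$ to the assigned vertices forming a closed directed walk of length $m$ on the $q$-cycle, which exists if and only if $q\mid m$. When $q\mid m$, the periodic assignment makes every reversed summand $0$ because $q\geq3$. Your modular bookkeeping ($k_{i+1}\equiv k_{i}+1\pmod q$, and $q\mid 2$ for a reversed edge) only spells out steps the paper states in words.
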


\begin{proof}
In a graph semiring, every product of two elements belongs to
$\{0,\omega\}$.  The value of $\mathbf c_{m}$ is $\omega$ precisely when all
its summands are $\omega$, which means that the assigned vertices form a
closed directed walk of length $m$ around the $q$-cycle.  Such a walk exists
if and only if $q\mid m$.  If $q\nmid m$, the value of $\mathbf c_{m}$ is
therefore $0$ under every assignment.  The same argument, with the direction
reversed, applies to $\mathbf c_{m}^{\op}$, so the identity holds.

Now suppose $q\mid m$.  Assign the variables successively around the directed
cycle, periodically with period $q$.  Then every summand of $\mathbf c_{m}$ is
$\omega$, and hence $\mathbf c_{m}=\omega$.  Since $q\geq3$, no reversed
consecutive pair is a directed edge; every summand of
$\mathbf c_{m}^{\op}$ is $0$.  Thus the identity fails.
\end{proof}

Let $P$ be the set of odd primes.  For $p\in P$, write
\begin{equation}\label{eq:sigma-p}
 \sigma_{p}:\qquad \mathbf c_{p}\approx\mathbf c_{p}^{\op}.
\end{equation}
Since multiplication in $S_{7}$ is commutative,
\begin{equation}\label{eq:S7-sigma}
 S_{7}\models\sigma_{p}\qquad(p\in P).
\end{equation}
By \cref{lem:cycle-reversal}, for odd primes $p,q$,
\begin{equation}\label{eq:cycle-sigma}
 S_{\mathbf c_{q}}\models\sigma_{p}
 \quad\Longleftrightarrow\quad p\neq q.
\end{equation}

For each $Q\subseteq P$, define
\begin{equation}\label{eq:WQ}
 \mathcal W_{Q}
 =\V\bigl(S_{7},\{S_{\mathbf c_{q}}:q\in Q\}\bigr).
\end{equation}
By \cref{thm:graph-divisor}, every generator belongs to
$\V(\M_{2}(S_{7}))$, and hence
\[
 \V(S_{7})\leq\mathcal W_{Q}\leq\V(\M_{2}(S_{7})).
\]
Equations \eqref{eq:S7-sigma} and \eqref{eq:cycle-sigma} give
\begin{equation}\label{eq:WQ-sigma}
 \mathcal W_{Q}\models\sigma_{p}
 \quad\Longleftrightarrow\quad p\notin Q.
\end{equation}

\begin{theorem}\label{thm:power-set-embedding}
The map
\[
 Q\longmapsto\mathcal W_{Q}
\]
is an order embedding of the power set $\mathcal P(P)$ into the interval
\[
 [\V(S_{7}),\V(\M_{2}(S_{7}))].
\]
\end{theorem}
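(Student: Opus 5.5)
The plan is to verify the two directions of an order embedding, $Q\subseteq R\iff\mathcal W_{Q}\leq\mathcal W_{R}$, together with membership in the interval. Membership has already been recorded just before the statement: \cref{thm:graph-divisor} puts each generator of $\mathcal W_{Q}$ in $\V(\M_{2}(S_{7}))$, and $S_{7}$ is itself a generator, so $\V(S_{7})\leq\mathcal W_{Q}\leq\V(\M_{2}(S_{7}))$.

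For the forward direction, suppose $Q\subseteq R$. Then every generator of $\mathcal W_{Q}$, namely $S_{7}$ and $S_{\mathbf c_{q}}$ for $q\in Q$, is among the generators of $\mathcal W_{R}$. Since $\mathcal W_{R}$ is a variety containing all of them, we get $\mathcal W_{Q}\leq\mathcal W_{R}$.

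For the reverse direction, suppose $\mathcal W_{Q}\leq\mathcal W_{R}$ but $Q\not\subseteq R$. Pick $p\in Q\setminus R$. By \eqref{eq:WQ-sigma}, $p\notin R$ gives $\mathcal W_{R}\models\sigma_{p}$. Since $\mathcal W_{Q}\leq\mathcal W_{R}$, it follows that $\mathcal W_{Q}\models\sigma_{p}$. But \eqref{eq:WQ-sigma} with $p\in Q$ says $\mathcal W_{Q}\not\models\sigma_{p}$, a contradiction. Hence $Q\subseteq R$, and injectivity follows from order-reflection by antisymmetry.

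The only substantive point is \eqref{eq:WQ-sigma}, which I take as established from \eqref{eq:S7-sigma} and \eqref{eq:cycle-sigma}. For completeness I would recall its justification. An identity holds in the variety generated by a class exactly when it holds in every member of the class. Here $S_{7}\models\sigma_{p}$, and $S_{\mathbf c_{q}}\models\sigma_{p}$ exactly when $q\neq p$, by \cref{lem:cycle-reversal} applied with $q\geq3$ odd. No real obstacle is expected; the argument is a routine separation of varieties by the identities $\sigma_{p}$.
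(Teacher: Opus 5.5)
Your proposal is correct and follows the paper's proof: you show monotonicity by inclusion of generating sets, and you get order-reflection by choosing $p\in Q\setminus R$ and separating $\mathcal W_{Q}$ from $\mathcal W_{R}$ with $\sigma_{p}$ via \eqref{eq:WQ-sigma}. The only difference is cosmetic: you phrase the reflection step as a contradiction rather than directly concluding $\mathcal W_{Q}\nleq\mathcal W_{R}$.
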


\begin{proof}
If $Q\subseteq R$, the generating set for $\mathcal W_{Q}$ is contained in
the generating set for $\mathcal W_{R}$, so
$\mathcal W_{Q}\leq\mathcal W_{R}$.

Conversely, suppose $Q\nsubseteq R$ and choose $p\in Q\setminus R$.  By
\eqref{eq:WQ-sigma}, the variety $\mathcal W_{R}$ satisfies $\sigma_{p}$,
whereas $\mathcal W_{Q}$ does not, because it contains
$S_{\mathbf c_{p}}$.  Hence $\mathcal W_{Q}\nleq\mathcal W_{R}$.
\end{proof}

\begin{theorem}\label{thm:continuum-interval}
For every $n\geq2$,
\[
 \bigl|[\V(S_{7}),\V(\M_{n}(S_{7}))]\bigr|=2^{\aleph_{0}}.
\]
Each interval contains a chain and an antichain of cardinality
$2^{\aleph_{0}}$.
\end{theorem}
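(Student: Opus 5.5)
The plan is to obtain a lower bound of $2^{\aleph_{0}}$ from \cref{thm:power-set-embedding} and an upper bound by counting equational theories.

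For the lower bound, fix $n\geq2$. By \cref{cor:dimension-chain}, $\V(\M_{2}(S_{7}))\leq\V(\M_{n}(S_{7}))$. Hence the interval $[\V(S_{7}),\V(\M_{2}(S_{7}))]$ is a subinterval of $[\V(S_{7}),\V(\M_{n}(S_{7}))]$, and it suffices to work in the former. Since $P$ is countably infinite, $\mathcal P(P)$ has cardinality $2^{\aleph_{0}}$. By \cref{thm:power-set-embedding}, the map $Q\mapsto\mathcal W_{Q}$ is an order embedding, hence injective, so the interval has at least $2^{\aleph_{0}}$ members.

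For the upper bound, note that there are only countably many ai-semiring identities over a countable set of variables. A variety is determined by its equational theory, which is a subset of this countable set. Therefore the lattice of all ai-semiring varieties, and a fortiori each interval, has cardinality at most $2^{\aleph_{0}}$. Equality follows.

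For the chain and antichain it suffices, again by the order embedding, to exhibit a continuum chain and a continuum antichain in $\mathcal P(P)$. An order embedding carries chains to chains and antichains to antichains, since it both preserves and reflects order.

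For the chain, enumerate $P$ by the rationals: fix a bijection $\beta:P\to\mathbb Q$. For each real $r$, put $Q_{r}=\beta^{-1}(\{x\in\mathbb Q:x<r\})$. Then $r<r'$ gives $Q_{r}\subsetneq Q_{r'}$, by density of $\mathbb Q$. This yields a chain of cardinality $2^{\aleph_{0}}$.

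For the antichain, split $P$ into two disjoint infinite sets $P_{0}=\{p_{0},p_{1},\dots\}$ and $P_{1}=\{p'_{0},p'_{1},\dots\}$. For $T\subseteq\mathbb N$, put
\[
 A_{T}=\{p_{k}:k\in T\}\cup\{p'_{k}:k\notin T\}.
\]
If $T\neq T'$, choose $k$ in their symmetric difference. If $k\in T\setminus T'$, then $p_{k}\in A_{T}\setminus A_{T'}$ and $p'_{k}\in A_{T'}\setminus A_{T}$. The case $k\in T'\setminus T$ is symmetric. So distinct $A_{T}$ are incomparable, giving an antichain of size $2^{\aleph_{0}}$.

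The only mildly delicate point is remembering to reflect order as well as preserve it when transferring the antichain. This is exactly what \cref{thm:power-set-embedding} provides, so no real obstacle arises.
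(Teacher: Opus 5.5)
Your proposal is correct and follows the paper's proof. You take the lower bound and the continuum chain and antichain from the order embedding of \cref{thm:power-set-embedding}, move them into the larger interval via \cref{cor:dimension-chain}, and get the upper bound by counting equational theories. Your Dedekind-cut chain and complementary-pair antichain in $\mathcal P(P)$ simply supply details that the paper treats as evident.
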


\begin{proof}
The set $P$ is countably infinite, so $\mathcal P(P)$ has cardinality
$2^{\aleph_{0}}$ and has both a chain and an antichain of that cardinality.
By \cref{thm:power-set-embedding}, these occur already in
$[\V(S_{7}),\V(\M_{2}(S_{7}))]$.  By
\cref{cor:dimension-chain}, this interval is contained in the corresponding
interval for every $n\geq2$.

There are only countably many identities in the countable ai-semiring
language, so the class of all ai-semiring varieties has cardinality at most
$2^{\aleph_{0}}$.  The lower and upper bounds therefore coincide.
\end{proof}

Jiao and Ren proved that every variety in
$[\V(S_{7}),\V(\M_{n}(S_{7}))]$ is nonfinitely based
\cite[Corollary 3.8]{JR}.  We immediately obtain the promised strengthening
of their countability result.

\begin{corollary}\label{cor:continuum-NFB}
For every $n\geq2$, the interval
$[\V(S_{7}),\V(\M_{n}(S_{7}))]$ contains continuum many nonfinitely based
varieties, including a chain and an antichain of cardinality
$2^{\aleph_{0}}$.
\end{corollary}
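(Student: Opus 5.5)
The corollary combines \cref{thm:continuum-interval} with the Jiao--Ren result \cite[Corollary 3.8]{JR}, which says that every variety in $[\V(S_{7}),\V(\M_{n}(S_{7}))]$ is nonfinitely based. The proof needs no new construction, only a check that the varieties produced earlier lie in the interval to which the cited result applies.

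Fix $n\geq2$. By \cref{thm:power-set-embedding}, the varieties $\mathcal W_{Q}$ for $Q\subseteq P$ are pairwise distinct members of $[\V(S_{7}),\V(\M_{2}(S_{7}))]$. By \cref{cor:dimension-chain}, $\V(\M_{2}(S_{7}))\leq\V(\M_{n}(S_{7}))$, so every $\mathcal W_{Q}$ also lies in $[\V(S_{7}),\V(\M_{n}(S_{7}))]$. The cited result of Jiao and Ren then makes each $\mathcal W_{Q}$ nonfinitely based.

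Since $P$ is countably infinite, $\{\mathcal W_{Q}:Q\subseteq P\}$ has cardinality $2^{\aleph_{0}}$. Order embeddings preserve and reflect the order, so a chain or antichain of cardinality $2^{\aleph_{0}}$ in $\mathcal P(P)$ maps to one of the same cardinality in the interval. Concretely, identify $P$ with $\mathbb Q$ and take the Dedekind cuts $\{r<x\}$, $x\in\mathbb R$, for the chain. For the antichain, take an almost disjoint family of infinite subsets of $P$ of size continuum, in which no member contains another. Every member of these families is nonfinitely based by the previous paragraph.

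The only point needing care is confirming that the Jiao--Ren statement covers the whole closed interval for every $n\geq2$, including $n=2$, and is not restricted to large $n$ or to varieties of a special form. As stated in the introduction it is asserted for all of $[\V(S_{7}),\V(\M_{n}(S_{7}))]$, so nothing further is needed.
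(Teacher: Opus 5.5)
Your proposal is correct and is essentially the paper's own argument: the corollary is obtained by combining \cref{thm:continuum-interval} (via the power-set embedding $Q\mapsto\mathcal W_{Q}$ and \cref{cor:dimension-chain}) with the Jiao--Ren theorem that every variety in $[\V(S_{7}),\V(\M_{n}(S_{7}))]$ is nonfinitely based. Your explicit Dedekind-cut chain and almost-disjoint antichain in $\mathcal P(P)$ only spell out what the paper leaves implicit in the proof of \cref{thm:continuum-interval}.
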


\section{The last three powers of \texorpdfstring{$\M_{n}(S_{7})\setminus\{[1]_{n}\}$}{Mn(S7) minus [1]n}}\label{sec:terminal-powers}

Put
\[
 N_{n}=\M_{n}(S_{7})\setminus\{[1]_{n}\}.
\]
This is a subsemiring of $\M_{n}(S_{7})$ \cite{JR}.  For nonempty
$I,J\subseteq[n]$, define
\begin{equation}\label{eq:EIJ}
 (E_{I,J})_{rs}=
 \begin{cases}
  a,&r\in I,\ s\in J,\\
  \infty,&\text{otherwise}.
 \end{cases}
\end{equation}

\begin{theorem}\label{thm:terminal-powers}
For every $n\geq2$,
\begin{align}
 N_{n}^{3}
 &=\{[\infty]_{n}\}\cup
   \{E_{I,J}:\varnothing\neq I,J\subseteq[n],
                 (I,J)\neq([n],[n])\},\label{eq:N3}\\
 N_{n}^{4}
 &=\{[\infty]_{n}\}\cup
   \{E_{I,J}:\varnothing\neq I,J\subsetneq[n]\},\label{eq:N4}\\
 N_{n}^{5}&=\{[\infty]_{n}\}.\label{eq:N5}
\end{align}
Consequently,
\[
 |N_{n}^{3}|=(2^{n}-1)^{2},\qquad
 |N_{n}^{4}|=(2^{n}-2)^{2}+1.
\]
In particular, the multiplicative reduct of $N_{n}$ is $5$-nilpotent but not
$4$-nilpotent.
\end{theorem}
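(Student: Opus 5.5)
The plan is to reduce everything to a description of when an entry of a triple product equals $a$, using \cref{lem:layer-structure} with $m=3$. Let $A,B,C\in N_{n}$ and $M=ABC$. Since $B\neq[1]_{n}$, some entry $B_{rs}\in\{a,\infty\}$. For every $(i,j)$ the expansion \eqref{eq:path-expansion} of $M_{ij}$ contains the summand $A_{ir}B_{rs}C_{sj}$, which is not $1$. Hence $M_{ij}\neq1$ by \cref{lem:flat-sum}, so all entries of $M$ lie in $\{a,\infty\}$.

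Next, suppose $M_{ij}=a$. \Cref{lem:layer-structure} gives layer maps $\lambda_{1},\lambda_{2}:[n]\to\{0,1\}$ with the following three properties. Row $i$ of $A$ is $e(\lambda_{1}(\cdot))$. Column $j$ of $C$ is $e(1-\lambda_{2}(\cdot))$. And $B_{kl}=e(\lambda_{2}(l)-\lambda_{1}(k))$ for all $k,l$. Conversely, any pair $(\lambda_{1},\lambda_{2})$ with $\lambda_{2}-\lambda_{1}\in\{0,1\}$ pointwise, satisfying these three conditions, forces every path product to be $a$, so $M_{ij}=a$.

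The key observation is that when $B\neq[1]_{n}$, the pair $(\lambda_{1},\lambda_{2})$ is uniquely determined by $B$ alone. We argue by cases, using the final clause of \cref{lem:layer-structure} that at most one of $\lambda_{1},\lambda_{2}$ is nonconstant:
\begin{enumerate}[label=\textup{(\alph*)}]
\item If $\lambda_{1}$ is nonconstant, then $\lambda_{2}\equiv1$, and $B_{kl}=e(1-\lambda_{1}(k))$ recovers $\lambda_{1}$ from $B$.
\item If $\lambda_{2}$ is nonconstant, the situation is symmetric.
\item If both are constant, then $\lambda_{1}=\lambda_{2}$ would force $B=[1]_{n}$, so $(\lambda_{1},\lambda_{2})=(0,1)$ and $B=[a]_{n}$.
\end{enumerate}
Writing $(\lambda_{1}^{B},\lambda_{2}^{B})$ for this pair, set
\[
 I=\{i:\text{row } i \text{ of } A \text{ equals } e(\lambda_{1}^{B})\},\qquad
 J=\{j:\text{column } j \text{ of } C \text{ equals } e(1-\lambda_{2}^{B})\}.
\]
Then $M=E_{I,J}$ if $I,J\neq\varnothing$, and $M=[\infty]_{n}$ otherwise.

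This gives the inclusion $\subseteq$ in \eqref{eq:N3} once we exclude $(I,J)=([n],[n])$. That case would require $A$ to have all rows equal to $e(\lambda_{1})$ and $C$ to have all columns equal to $e(1-\lambda_{2})$. In each of the three cases above, one of $A$ or $C$ is then $[1]_{n}$, which is impossible in $N_{n}$. For the reverse inclusion, we realize each admissible $(I,J)$ by an explicit choice. If $I\neq[n]$, take $\lambda_{1}=\mathbf 1_{[n]\setminus I}$ (nonconstant, since $I\neq\varnothing$) with $\lambda_{2}\equiv1$. Let $A$ have rows $e(\lambda_{1})$ on $I$ and rows containing an $\infty$ elsewhere, and let $C$ have columns $[1]$ on $J$ and a spoiled entry off $J$. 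If $J=[n]$, then $C=[1]_{n}$, so instead we use the symmetric construction with $\lambda_{2}$ nonconstant, which needs $J\neq[n]$. If $I=[n]$ but $J\neq[n]$, we again use the symmetric construction. If both $I\neq[n]$ and $J\neq[n]$, we may use $B=[a]_{n}$, with $A$ and $C$ having rows and columns $[1]$ on $I$ and $J$. The value $[\infty]_{n}$ is obtained with $I=\varnothing$.

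For \eqref{eq:N4}, we compute $N_{n}^{4}=N_{n}\cdot N_{n}^{3}$ and $N_{n}^{3}\cdot N_{n}$, applying the same formula with $B$ replaced by a triple product $E_{I',J'}$. For \eqref{eq:N5}, we apply it again with $B\in N_{n}^{3}$ or $B\in N_{n}^{4}$. The main obstacle is bookkeeping: determining $(\lambda_{1}^{B},\lambda_{2}^{B})$ for $B=E_{I',J'}$. Since $E_{I',J'}$ contains $\infty$ unless it equals $[a]_{n}$, the only admissible middle factor is $B=[a]_{n}$, which forces both outer factors to contribute rows and columns equal to $[1]$. So I would first show that $X\cdot E_{I,J}$ equals $E_{I'',J}$ (where $I''$ is the set of rows of $X$ that are all $1$ restricted to $I$-columns) or $[\infty]_{n}$. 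This forces both $I$ and $J$ to be proper in a fourfold product, with every proper pair attainable from $(X,E_{I,[n]\text{-proper}})$ combinations. It also shows that a fifth factor kills everything, because $E_{I,J}$ with $I\neq[n]$ has an $\infty$ in every column. The cardinality formulas then follow by counting pairs of nonempty subsets. Non-$4$-nilpotency follows from $N_{n}^{4}\neq\{[\infty]_{n}\}$ for $n\geq2$.
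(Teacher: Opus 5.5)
Your route is the paper's in different notation. With $m=3$ the layer maps give $\alpha_r=\lambda_1(r)$, $\beta_{rs}=\lambda_2(s)-\lambda_1(r)$ and $\gamma_s=1-\lambda_2(s)$. The paper's two cases are your (a) and (b)$\cup$(c), and your observation that $(\lambda_1^B,\lambda_2^B)$ depends only on $B$ is what makes the $a$-set a rectangle. The forward inclusion in \eqref{eq:N3} is correct. So is your shortcut $N_n^5=N_n\cdot N_n^3\cdot N_n=\{[\infty]_n\}$: every element of $N_n^3$ contains $\infty$, since $[a]_n=E_{[n],[n]}$ is excluded.

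However, your witnesses for rectangles with a full side fail as written. Your case-(a) witness ties the nonconstant layer map to the rectangle via $\lambda_1=\mathbf 1_{[n]\setminus I}$, which becomes constant when $I=[n]$. No ``symmetric construction'' with $\lambda_2$ nonconstant can produce $I=[n]$ either: then $\lambda_1\equiv0$, the rows of $A$ indexed by $I$ are all $1$, and $I=[n]$ gives $A=[1]_n$, by your own exclusion argument. Your sentence invoking that construction for $J=[n]$ ``which needs $J\neq[n]$'' is self-contradictory for the same reason. The missing point is that the nonconstant layer map must be chosen independently of the full side. Fix a nonempty proper $K\subsetneq[n]$; this is where $n\geq2$ is used. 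For $I=[n]$ and $J\subsetneq[n]$, take $\lambda_1=\mathbf 1_{[n]\setminus K}$ and $\lambda_2\equiv1$. Let every row of $A$ equal $e(\lambda_1)$, put $B_{kl}=e(\mathbf 1_K(k))$, and let $C$ be the column selector of $J$. Handle $I\subsetneq[n]$, $J=[n]$ dually with $\lambda_1\equiv0$ and $\lambda_2=\mathbf 1_K$. This is precisely the role of the paper's auxiliary set $K$.

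Second, your two-factor formula for $X\cdot E_{I,J}$ is false as stated. If $I\neq[n]$, every column of $E_{I,J}$ contains $\infty$, so $XE_{I,J}=[\infty]_n$ regardless of $X$; this is the very fact you invoke for $N_n^5$. With $I''$ defined as the set of rows of $X$ that are all $1$ on the $I$-columns, you would wrongly obtain $E_{[n],J}\in N_n^4$. Take $X$ with all-$1$ columns on $I$ and an $a$ elsewhere; this contradicts \eqref{eq:N4}. The correct computation runs as follows. If $XE_{I,J}\neq[\infty]_n$, then $I=[n]$, hence $J\subsetneq[n]$ because $([n],J)\neq([n],[n])$. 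Then $XE_{[n],J}$ equals $E_{I'',J}$, or $[\infty]_n$ if $I''=\varnothing$, where $I''$ is the set of all-$1$ rows of $X$. This $I''$ is proper since $X\neq[1]_n$. This is the mirror image of the paper's $E_{I,[n]}D=E_{I,J(D)}$. Correspondingly, attaining a proper pair must use $X\cdot E_{[n],J}$ or $E_{I,[n]}\cdot D$, never $X\cdot E_{I,[n]}$ with $I$ proper. Finally, ``the same formula with $B$ replaced by a triple product'' describes fivefold products. It is therefore relevant only to \eqref{eq:N5}, not to \eqref{eq:N4}.
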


\begin{proof}
We first determine $N_{n}^{3}$.  Let $A,B,C\in N_{n}$.  If
$(ABC)_{ij}=1$, then every product
\[
 A_{ir}B_{rs}C_{sj}\qquad(r,s\in[n])
\]
is $1$.  Hence $B_{rs}=1$ for all $r,s$, contradicting $B\in N_{n}$.
Thus every entry of $ABC$ belongs to $\{a,\infty\}$.

Assume that $(ABC)_{ij}=a$.  Then
\begin{equation}\label{eq:ABC-all-a}
 A_{ir}B_{rs}C_{sj}=a\qquad(r,s\in[n]).
\end{equation}
All entries in \eqref{eq:ABC-all-a} lie in $\{1,a\}$, and each product
contains exactly one $a$.  Define bits
\[
 \alpha_{r}=\begin{cases}0,&A_{ir}=1,\\1,&A_{ir}=a,\end{cases}
 \quad
 \beta_{rs}=\begin{cases}0,&B_{rs}=1,\\1,&B_{rs}=a,\end{cases}
 \quad
 \gamma_{s}=\begin{cases}0,&C_{sj}=1,\\1,&C_{sj}=a.\end{cases}
\]
Then
\begin{equation}\label{eq:binary-ABC}
 \alpha_{r}+\beta_{rs}+\gamma_{s}=1
 \qquad(r,s\in[n]).
\end{equation}

Suppose first that $\alpha_{r_{0}}=1$ for some $r_{0}$.  Equation
\eqref{eq:binary-ABC} gives $\gamma_{s}=0$ for every $s$, and then
\[
 \beta_{rs}=1-\alpha_{r}\qquad(r,s\in[n]).
\]
Let
\[
 K=\{r:\alpha_{r}=0\}.
\]
The set $K$ is nonempty, since otherwise $B=[1]_{n}$.  Thus the $r$-th row of
$B$ is constantly $a$ for $r\in K$ and constantly $1$ for $r\notin K$.
Define
\[
 I=\{p:\ A_{pr}=1\ (r\in K),\ A_{pr}=a\ (r\notin K)\},
\]
\[
 J=\{q:\ C_{sq}=1\text{ for every }s\in[n]\}.
\]
We claim that
\begin{equation}\label{eq:rectangle-case1}
 (ABC)_{pq}=a\quad\Longleftrightarrow\quad p\in I,\ q\in J.
\end{equation}
If $p\in I$ and $q\in J$, every product
$A_{pr}B_{rs}C_{sq}$ contains exactly one $a$, so the coordinate is $a$.
Conversely, if the coordinate is $a$, then every such product is $a$.  Taking
$r\in K$, where $B_{rs}=a$, forces $A_{pr}=1$ and $C_{sq}=1$ for all $s$;
hence $q\in J$.  Taking $r\notin K$ then forces $A_{pr}=a$.  Thus $p\in I$.
The original indices satisfy $i\in I$ and $j\in J$.  Moreover, $J\neq[n]$,
for otherwise $C=[1]_{n}$.  Hence
$ABC=E_{I,J}$ with $I,J$ nonempty and $J$ proper.

Suppose next that every $\alpha_{r}=0$.  Equation
\eqref{eq:binary-ABC} becomes
\[
 \beta_{rs}=1-\gamma_{s}.
\]
Let
\[
 K=\{s:\gamma_{s}=0\}.
\]
Again $K$ is nonempty, since otherwise $B=[1]_{n}$.  Now the $s$-th column of
$B$ is constantly $a$ for $s\in K$ and constantly $1$ for $s\notin K$.
Define
\[
 I=\{p:\ A_{pr}=1\text{ for every }r\in[n]\},
\]
\[
 J=\{q:\ C_{sq}=1\ (s\in K),\ C_{sq}=a\ (s\notin K)\}.
\]
The same coordinatewise argument gives
\[
 (ABC)_{pq}=a\quad\Longleftrightarrow\quad p\in I,\ q\in J.
\]
Here $I$ is proper, because $A\neq[1]_{n}$.  Thus every nonzero triple product
has the form claimed in \eqref{eq:N3}.

Conversely, let $I,J$ be nonempty and not both equal to $[n]$.  If both are
proper, set
\[
 A_{pr}=\begin{cases}1,&p\in I,\\\infty,&p\notin I,\end{cases}
 \qquad B=[a]_{n},
 \qquad
 C_{sq}=\begin{cases}1,&q\in J,\\\infty,&q\notin J.\end{cases}
\]
Then $A,B,C\in N_{n}$ and $ABC=E_{I,J}$.

If $I=[n]$ and $J\subsetneq[n]$, choose a nonempty proper set
$K\subsetneq[n]$ and put
\[
 A_{pr}=\begin{cases}1,&r\in K,\\a,&r\notin K,\end{cases}
 \qquad
 B_{rs}=\begin{cases}a,&r\in K,\\1,&r\notin K,\end{cases}
\]
with $C$ the column selector used above.  Then $ABC=E_{[n],J}$.
The case $I\subsetneq[n]$ and $J=[n]$ is symmetric: take the row selector
$A$, and put
\[
 B_{rs}=\begin{cases}1,&s\in K,\\a,&s\notin K,\end{cases}
 \qquad
 C_{sq}=\begin{cases}a,&s\in K,\\1,&s\notin K.\end{cases}
\]
This proves \eqref{eq:N3}.

We next determine $N_{n}^{4}$.  Let $E_{I,J}$ be a nonzero element of
$N_{n}^{3}$ and let $D\in N_{n}$.  If $J\subsetneq[n]$, then every row of
$E_{I,J}$ contains $\infty$, so
\[
 E_{I,J}D=[\infty]_{n}.
\]
If $J=[n]$, then $I\subsetneq[n]$.  Define
\[
 J(D)=\{q:\ D_{kq}=1\text{ for every }k\in[n]\}.
\]
For $p\in I$,
\[
 (E_{I,[n]}D)_{pq}=\sum_{k=1}^{n}aD_{kq},
\]
which is $a$ precisely when $q\in J(D)$ and is $\infty$ otherwise.  For
$p\notin I$ it is always $\infty$.  Thus
\[
 E_{I,[n]}D=
 \begin{cases}
 E_{I,J(D)},&J(D)\neq\varnothing,\\
 [\infty]_{n},&J(D)=\varnothing.
 \end{cases}
\]
Since $D\neq[1]_{n}$, the set $J(D)$ is proper.  Hence every nonzero element
of $N_{n}^{4}$ has the form in \eqref{eq:N4}.

Conversely, if $I,J$ are nonempty proper subsets, then
$E_{I,[n]}\in N_{n}^{3}$.  Taking
\[
 D_{kq}=\begin{cases}1,&q\in J,\\\infty,&q\notin J\end{cases}
\]
gives $E_{I,[n]}D=E_{I,J}$.  This proves \eqref{eq:N4}.

Finally, every $E_{I,J}$ occurring in $N_{n}^{4}$ has $J\subsetneq[n]$, so
every row contains $\infty$ and its product on the right by any matrix is
$[\infty]_{n}$.  Hence \eqref{eq:N5} holds.

There are $(2^{n}-1)^{2}-1$ admissible nonzero rectangles in
\eqref{eq:N3}; after adding $[\infty]_{n}$ this gives
$(2^{n}-1)^{2}$.  There are $(2^{n}-2)^{2}$ rectangles in
\eqref{eq:N4}, giving the second cardinality formula.  Since
$N_{n}^{4}$ contains nonzero matrices, the nilpotency index is exactly five.
\end{proof}

\section{Matrix operators on the variety lattice}\label{sec:matrix-operators}

This final section records general consequences of the matrix construction.
Let $\mathcal L_{\mathrm{ai}}$ be the lattice of all ai-semiring varieties.
For $n\geq1$, define
\[
 \mathscr M_{n}(\mathcal V)
 =\V\{\M_{n}(S):S\in\mathcal V\}.
\]
The matrix construction preserves subalgebras, homomorphic images and direct
products; the direct-product assertion follows from the coordinatewise
isomorphism
\[
 \M_{n}\left(\prod_{i\in I}S_{i}\right)
 \cong\prod_{i\in I}\M_{n}(S_{i}).
\]

\begin{theorem}\label{thm:matrix-operators}
For ai-semiring varieties $\mathcal V$ and $(\mathcal V_{i})_{i\in I}$ and
positive integers $m,n$, the following hold.
\begin{enumerate}[label=\textup{(\roman*)}]
\item $\mathscr M_{1}(\mathcal V)=\mathcal V$;
\item $\mathscr M_{m}(\mathscr M_{n}(\mathcal V))
      =\mathscr M_{mn}(\mathcal V)$;
\item
\[
 \mathscr M_{n}\left(\bigvee_{i\in I}\mathcal V_{i}\right)
 =\bigvee_{i\in I}\mathscr M_{n}(\mathcal V_{i});
\]
\item $\mathcal V\leq\mathscr M_{n}(\mathcal V)$;
\item if $n\leq m$, then
$\mathscr M_{n}(\mathcal V)\leq\mathscr M_{m}(\mathcal V)$.
\end{enumerate}
\end{theorem}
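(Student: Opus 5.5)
The plan is to treat $\mathscr M_n$ through the class operators $H$, $S$, $P$, using that $\M_n(-)$ commutes with them. Item (i) is immediate, since $\M_1(S)\cong S$. For the other items the basic observation is this: if $\mathcal K$ is a class and $\mathcal V=\V(\mathcal K)=HSP(\mathcal K)$, then
\[
 \mathscr M_n(\mathcal V)=\V\{\M_n(S):S\in\mathcal K\}.
\]
One inclusion holds because $\mathcal K\subseteq\mathcal V$. For the other, let $S\in HSP(\mathcal K)$. Then $\M_n(S)$ is a homomorphic image of $\M_n(T)$ for some $T\le\prod S_i$, because a surjective homomorphism induces one entrywise on matrices. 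Next, $\M_n(T)$ embeds in $\M_n(\prod S_i)\cong\prod\M_n(S_i)$. So $\M_n(S)\in HSP\{\M_n(S_i)\}$. I will record this as a lemma, and it gives (iii) at once: the join $\bigvee\mathcal V_i$ is generated by $\bigcup\mathcal V_i$, so both sides of (iii) are generated by $\{\M_n(S):S\in\bigcup\mathcal V_i\}$.

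For (iv), I will use \cref{cor:dimension-chain}: $S$ embeds in $\M_n(S)$ via $s\mapsto[s]_n$, so every $S\in\mathcal V$ lies in $\mathscr M_n(\mathcal V)$. For (v), \cref{cor:dimension-chain} (in fact \cref{thm:surjective-embedding}) embeds $\M_n(S)$ into $\M_m(S)$, so each generator of $\mathscr M_n(\mathcal V)$ lies in $\mathscr M_m(\mathcal V)$.

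For (ii), I apply the lemma to $\mathscr M_n(\mathcal V)$, which is generated by the class $\{\M_n(S):S\in\mathcal V\}$. This gives
\[
 \mathscr M_m(\mathscr M_n(\mathcal V))=\V\{\M_m(\M_n(S)):S\in\mathcal V\}.
\]
It then remains to show $\M_m(\M_n(S))\cong\M_{mn}(S)$ for every ai-semiring $S$. This is the standard block-matrix isomorphism: index $[mn]$ by $[m]\times[n]$ and send a block matrix $(A^{(k,l)})$ to the matrix with entry $A^{(k,l)}_{rs}$ at $((k,r),(l,s))$. Addition is preserved entrywise. For multiplication I will check that the $(r,s)$ entry of $\sum_{t}A^{(k,t)}B^{(t,l)}$ equals $\sum_{t}\sum_{u}A^{(k,t)}_{ru}B^{(t,l)}_{us}$, which is the $((k,r),(l,s))$ entry of the big product; this uses only distributivity and associativity of addition in $S$. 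No multiplicative identity is needed, which matters because ai-semirings here need not have one.

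The main obstacle is the commuting lemma, specifically keeping the $H$ step honest. A congruence $\theta$ on $T$ induces an entrywise congruence on $\M_n(T)$ with quotient $\M_n(T/\theta)$. The point to check is that the entrywise relation is compatible with matrix multiplication, which follows because each entry of a product is a finite sum of products and $\theta$ is compatible with both operations. Everything else reduces to \cref{cor:dimension-chain} and the block isomorphism.
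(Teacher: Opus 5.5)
Your proposal is correct and follows the paper's route. You commute $\M_n(-)$ with $H$, $S$ and $P$, which you usefully isolate as the lemma $\mathscr M_n(\V(\mathcal K))=\V\{\M_n(S):S\in\mathcal K\}$, and this gives (ii) and (iii); you use the block isomorphism $\M_m(\M_n(S))\cong\M_{mn}(S)$ for (ii), and the constant-matrix and surjective-index embeddings for (iv) and (v), all as the paper does, only with the verifications it leaves implicit written out.
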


\begin{proof}
Part \textup{(i)} is immediate.  For \textup{(ii)}, the usual block
identification gives a natural isomorphism
\[
 \M_{m}(\M_{n}(S))\cong\M_{mn}(S).
\]
Because the matrix construction passes through H, S and P, applying
$\mathscr M_{m}$ to the variety generated by all $\M_{n}(S)$ produces exactly
the variety generated by the corresponding $\M_{mn}(S)$.

For \textup{(iii)}, write the join as an HSP-closure of the union of the
$\mathcal V_{i}$ and again pass the matrix construction through H, S and P.
Part \textup{(iv)} follows from the constant-matrix embedding
$s\mapsto[s]_{n}$.  Part \textup{(v)} follows from
\cref{thm:surjective-embedding}.
\end{proof}

Call a variety $\mathcal V$ \emph{matrix-stable} if
$\mathscr M_{n}(\mathcal V)=\mathcal V$ for every $n\geq1$.

\begin{theorem}[Two-by-two test and stable closure]\label{thm:matrix-stable}
For an ai-semiring variety $\mathcal V$, the following are equivalent:
\[
 \mathscr M_{2}(\mathcal V)=\mathcal V,
 \qquad
 \mathscr M_{n}(\mathcal V)=\mathcal V\quad(n\geq1).
\]
Moreover,
\[
 \operatorname{Mat}(\mathcal V)
 =\bigvee_{k\geq0}\mathscr M_{2^{k}}(\mathcal V)
\]
is the least matrix-stable variety containing $\mathcal V$.
\end{theorem}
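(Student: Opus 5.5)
The plan rests on the operator calculus of \cref{thm:matrix-operators}.

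For the equivalence, one direction is trivial: take $n=2$. For the converse, assume $\mathscr M_{2}(\mathcal V)=\mathcal V$. Iterating part (ii) gives
\[
 \mathscr M_{2^{k}}(\mathcal V)=\mathscr M_{2}^{\,k}(\mathcal V)=\mathcal V
\]
by induction on $k$. Now fix any $n\geq1$ and choose $k$ with $2^{k}\geq n$. Monotonicity in the dimension, part (v), together with part (iv) gives
\[
 \mathcal V\leq\mathscr M_{n}(\mathcal V)\leq\mathscr M_{2^{k}}(\mathcal V)=\mathcal V .
\]
Hence $\mathscr M_{n}(\mathcal V)=\mathcal V$.

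For the stable closure, put $\mathcal W=\Mat(\mathcal V)=\bigvee_{k\geq0}\mathscr M_{2^{k}}(\mathcal V)$. Since the $k=0$ term is $\mathcal V$ itself, $\mathcal V\leq\mathcal W$. To see that $\mathcal W$ is matrix-stable, by the first part it suffices to check $\mathscr M_{2}(\mathcal W)\leq\mathcal W$; the reverse inequality is part (iv). Join preservation, part (iii), together with part (ii) gives
\[
 \mathscr M_{2}(\mathcal W)=\bigvee_{k\geq0}\mathscr M_{2}\bigl(\mathscr M_{2^{k}}(\mathcal V)\bigr)=\bigvee_{k\geq0}\mathscr M_{2^{k+1}}(\mathcal V)\leq\mathcal W .
\]

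For minimality, let $\mathcal U$ be matrix-stable with $\mathcal V\leq\mathcal U$. The operators are monotone, since by part (iii) applied to the two-element join $\mathcal V\vee\mathcal U=\mathcal U$ we get $\mathscr M_{n}(\mathcal V)\leq\mathscr M_{n}(\mathcal U)$. Stability of $\mathcal U$ then gives $\mathscr M_{2^{k}}(\mathcal V)\leq\mathscr M_{2^{k}}(\mathcal U)=\mathcal U$ for every $k$, and therefore $\mathcal W\leq\mathcal U$.

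The only point needing care is that the join in part (iii) must be allowed to be an infinite join. \Cref{thm:matrix-operators} states (iii) for arbitrary index sets, so it can be invoked directly. If one preferred not to rely on that, a direct argument is available: an identity holds in $\M_{2}(S)$ for all $S\in\mathcal W$ exactly when its coordinate expansions (\cref{prop:coordinate-criterion-general}) hold in $\mathcal W$. Those expansions hold in $\mathcal W$ precisely when they hold in every $\mathscr M_{2^{k}}(\mathcal V)$, and that in turn amounts to the identity holding in every $\mathscr M_{2^{k+1}}(\mathcal V)$. This gives a second route to $\mathscr M_{2}(\mathcal W)\leq\mathcal W$.
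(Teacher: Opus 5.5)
Your proposal is correct and follows the paper's proof essentially step for step: powers of two via part (ii), the sandwich $\mathcal V\leq\mathscr M_{n}(\mathcal V)\leq\mathscr M_{2^{k}}(\mathcal V)$ via parts (iv) and (v), join preservation plus composition for $\mathscr M_{2}(\Mat(\mathcal V))\leq\Mat(\mathcal V)$, and monotonicity for minimality. Your explicit derivation of monotonicity from the two-element join and your alternative coordinate-expansion route to $\mathscr M_{2}(\mathcal W)\leq\mathcal W$ are both valid; the paper uses monotonicity without comment.
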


\begin{proof}
Assume $\mathscr M_{2}(\mathcal V)=\mathcal V$.  By
\cref{thm:matrix-operators}(ii),
\[
 \mathscr M_{2^{k}}(\mathcal V)=\mathcal V
\]
for every $k$.  Given $n$, choose $k$ with $n\leq2^{k}$.  Parts
\textup{(iv)} and \textup{(v)} of \cref{thm:matrix-operators} give
\[
 \mathcal V\leq\mathscr M_{n}(\mathcal V)
 \leq\mathscr M_{2^{k}}(\mathcal V)=\mathcal V.
\]
The converse is immediate.

Let $\mathcal W=\operatorname{Mat}(\mathcal V)$.  By join preservation and
composition,
\[
 \mathscr M_{2}(\mathcal W)
 =\bigvee_{k\geq0}\mathscr M_{2^{k+1}}(\mathcal V)
 \leq\mathcal W.
\]
The reverse inclusion follows from extensivity, so $\mathcal W$ is
matrix-stable.  If a matrix-stable $\mathcal U$ contains $\mathcal V$, then
\[
 \mathscr M_{2^{k}}(\mathcal V)
 \leq\mathscr M_{2^{k}}(\mathcal U)=\mathcal U
\]
for every $k$, and hence $\mathcal W\leq\mathcal U$.
\end{proof}

For a variety $\mathcal W$, define
\[
 \mathscr R_{n}(\mathcal W)
 =\{S:\M_{n}(S)\in\mathcal W\}.
\]
The class on the right is itself a variety.  Indeed, the matrix construction
preserves subalgebras, homomorphic images, and direct products.

\begin{theorem}[Right adjoints and the matrix-stable core]
\label{thm:right-adjoint-core}
For every positive integer $n$, the operator $\mathscr R_{n}$ is right
adjoint to $\mathscr M_{n}$; explicitly,
\[
 \mathscr M_{n}(\mathcal V)\leq\mathcal W
 \quad\Longleftrightarrow\quad
 \mathcal V\leq\mathscr R_{n}(\mathcal W).
\]
Moreover,
\[
 \mathscr R_{m}\mathscr R_{n}=\mathscr R_{mn},
 \qquad
 \mathscr R_{n}(\mathcal W)\leq\mathcal W.
\]
The variety
\[
 \Core(\mathcal W)
 =\bigcap_{k\geq0}\mathscr R_{2^{k}}(\mathcal W)
\]
is the greatest matrix-stable subvariety of $\mathcal W$.
\end{theorem}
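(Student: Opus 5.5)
The adjunction comes straight from the definitions, and I would prove it first. Suppose $\mathscr M_{n}(\mathcal V)\leq\mathcal W$. Then every $S\in\mathcal V$ has $\M_{n}(S)\in\mathscr M_{n}(\mathcal V)\subseteq\mathcal W$, so $S\in\mathscr R_{n}(\mathcal W)$. Conversely, suppose $\mathcal V\leq\mathscr R_{n}(\mathcal W)$. Then every generator $\M_{n}(S)$, $S\in\mathcal V$, of $\mathscr M_{n}(\mathcal V)$ lies in $\mathcal W$. Since $\mathcal W$ is a variety, $\mathscr M_{n}(\mathcal V)\leq\mathcal W$. This uses the remark before the theorem that $\mathscr R_{n}(\mathcal W)$ is a variety. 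I would check that remark briefly: $\M_{n}$ commutes with S, P and H. For H, a surjection $S\to T$ induces a surjection $\M_{n}(S)\to\M_{n}(T)$ entrywise.

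Next come the two identities. For $\mathscr R_{m}\mathscr R_{n}=\mathscr R_{mn}$, note that
\[
S\in\mathscr R_{m}\mathscr R_{n}(\mathcal W)
\iff \M_{m}(S)\in\mathscr R_{n}(\mathcal W)
\iff \M_{n}(\M_{m}(S))\in\mathcal W .
\]
By the block isomorphism used in \cref{thm:matrix-operators}(ii), $\M_{n}(\M_{m}(S))\cong\M_{mn}(S)$, and the chain closes. Alternatively, one can argue by uniqueness of adjoints from $\mathscr M_{n}\mathscr M_{m}=\mathscr M_{mn}$. For $\mathscr R_{n}(\mathcal W)\leq\mathcal W$: if $\M_{n}(S)\in\mathcal W$, then $S$ embeds into $\M_{n}(S)$ via constant matrices (\cref{cor:dimension-chain}), so $S\in\mathcal W$. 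I also need antitonicity in the dimension. If $n\mid m$, write $m=nk$; then $\mathscr R_{m}=\mathscr R_{n}\mathscr R_{k}\leq\mathscr R_{n}$, because $\mathscr R_{k}(\mathcal W)\leq\mathcal W$ and $\mathscr R_{n}$ is monotone. Monotonicity of $\mathscr R_{n}$ is clear from its definition. More generally, \cref{thm:surjective-embedding} gives $\mathscr R_{m}\leq\mathscr R_{n}$ whenever $n\leq m$.

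For the core, put $\mathcal C=\Core(\mathcal W)$. It is an intersection of varieties, hence a variety, and it lies in $\mathscr R_{1}(\mathcal W)=\mathcal W$. By \cref{thm:matrix-stable} it suffices to show $\mathscr M_{2}(\mathcal C)\leq\mathcal C$, or equivalently, by the adjunction, $\mathcal C\leq\mathscr R_{2}(\mathcal C)$. I expect this to be the main obstacle, namely making $\mathscr R_{2}$ commute with the intersection. Membership is elementwise, so $\mathscr R_{2}$ preserves arbitrary intersections:
\[
\mathscr R_{2}(\mathcal C)=\bigcap_{k}\mathscr R_{2}\mathscr R_{2^{k}}(\mathcal W)=\bigcap_{k}\mathscr R_{2^{k+1}}(\mathcal W).
\]
This contains $\mathcal C$. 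Hence $\mathscr M_{2}(\mathcal C)\leq\mathcal C$, and with extensivity (\cref{thm:matrix-operators}(iv)) this gives $\mathscr M_{2}(\mathcal C)=\mathcal C$, so $\mathcal C$ is matrix-stable.

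Maximality remains. Let $\mathcal U\leq\mathcal W$ be matrix-stable. Then $\mathscr M_{2^{k}}(\mathcal U)=\mathcal U\leq\mathcal W$, so the adjunction gives $\mathcal U\leq\mathscr R_{2^{k}}(\mathcal W)$ for every $k$. Therefore $\mathcal U\leq\mathcal C$.
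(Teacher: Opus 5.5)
Your proof is correct and follows the paper's argument essentially step for step: the adjunction from the definitions, the block isomorphism for $\mathscr R_m\mathscr R_n=\mathscr R_{mn}$, constant matrices for $\mathscr R_n(\mathcal W)\leq\mathcal W$, and the same maximality argument. The only cosmetic difference is in proving $\mathscr M_2(\mathcal C)\leq\mathcal C$. You push $\mathscr R_2$ through the intersection, use $\mathscr R_2\mathscr R_{2^k}=\mathscr R_{2^{k+1}}$, and apply the adjunction once. The paper instead applies the adjunction twice around $\mathscr M_{2^{k+1}}=\mathscr M_{2^k}\mathscr M_2$; the two are dual forms of the same computation.
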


\begin{proof}
Suppose first that $\mathscr M_{n}(\mathcal V)\leq\mathcal W$.  For every
$S\in\mathcal V$, the algebra $\M_{n}(S)$ belongs to $\mathcal W$, whence
$S\in\mathscr R_{n}(\mathcal W)$.  Thus
$\mathcal V\leq\mathscr R_{n}(\mathcal W)$.  Conversely, if
$\mathcal V\leq\mathscr R_{n}(\mathcal W)$, then all generators
$\M_{n}(S)$ with $S\in\mathcal V$ lie in $\mathcal W$, and therefore
$\mathscr M_{n}(\mathcal V)\leq\mathcal W$.

The natural block isomorphism
$\M_{m}(\M_{n}(S))\cong\M_{mn}(S)$ gives
$\mathscr R_{m}\mathscr R_{n}=\mathscr R_{mn}$.  If
$S\in\mathscr R_{n}(\mathcal W)$, then $\M_{n}(S)\in\mathcal W$; the
constant-matrix embedding $S\hookrightarrow\M_{n}(S)$ shows that
$S\in\mathcal W$.  Hence $\mathscr R_{n}(\mathcal W)\leq\mathcal W$.

Put $\mathcal C=\Core(\mathcal W)$.  Clearly $\mathcal C\leq\mathcal W$.
For every $k\geq0$, the inclusion
$\mathcal C\leq\mathscr R_{2^{k+1}}(\mathcal W)$ and the adjunction imply
\[
 \mathscr M_{2^{k+1}}(\mathcal C)\leq\mathcal W.
\]
Using composition of the matrix operators, this is equivalent to
\[
 \mathscr M_{2^{k}}(\mathscr M_{2}(\mathcal C))\leq\mathcal W,
\]
and the adjunction again yields
$\mathscr M_{2}(\mathcal C)\leq\mathscr R_{2^{k}}(\mathcal W)$ for every
$k$.  Thus $\mathscr M_{2}(\mathcal C)\leq\mathcal C$.  The reverse
inclusion follows from extensivity, and hence $\mathcal C$ is matrix-stable
by \cref{thm:matrix-stable}.

Finally, let $\mathcal U$ be matrix-stable and suppose
$\mathcal U\leq\mathcal W$.  Then
$\mathscr M_{2^{k}}(\mathcal U)=\mathcal U\leq\mathcal W$ for all $k$.
By adjunction, $\mathcal U\leq\mathscr R_{2^{k}}(\mathcal W)$ for all $k$,
so $\mathcal U\leq\Core(\mathcal W)$.  This proves maximality.
\end{proof}

\begin{corollary}\label{cor:stable-lattice}
The matrix-stable varieties form a complete sublattice of the lattice of all
ai-semiring varieties.  For every variety $\mathcal V$,
\[
 \Core(\mathcal V)\leq\mathcal V\leq\Mat(\mathcal V),
\]
where $\Mat$ is a completely join-preserving closure operator and $\Core$ is
an interior operator.  The former is the least matrix-stable variety above
$\mathcal V$, and the latter is the greatest matrix-stable variety below it.
\end{corollary}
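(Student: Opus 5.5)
We need to prove four things: that the matrix-stable varieties form a complete sublattice, that $\Core(\mathcal V)\leq\mathcal V\leq\Mat(\mathcal V)$, that $\Mat$ is a completely join-preserving closure operator, and that $\Core$ is an interior operator. Everything reduces to \cref{thm:matrix-operators}, \cref{thm:matrix-stable} and \cref{thm:right-adjoint-core}.

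\emph{The sandwich and closure/interior properties.} The inequalities $\Core(\mathcal V)\leq\mathcal V\leq\Mat(\mathcal V)$ follow from two facts: $\mathscr R_{1}(\mathcal V)=\mathcal V$ occurs among the intersectands of $\Core$, and $\mathscr M_{1}(\mathcal V)=\mathcal V$ occurs among the joinands of $\Mat$. The extremal descriptions are already proved in \cref{thm:matrix-stable} and \cref{thm:right-adjoint-core}.

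From these descriptions the closure-operator properties are formal. For monotonicity, if $\mathcal V\leq\mathcal V'$, then $\Mat(\mathcal V')$ is matrix-stable and contains $\mathcal V$, so it contains $\Mat(\mathcal V)$. For idempotence, $\Mat(\mathcal V)$ is itself matrix-stable, so it is the least matrix-stable variety above itself. Dually, $\Core$ is monotone, deflationary and idempotent, hence an interior operator.

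\emph{Complete join preservation of $\Mat$.} Using \cref{thm:matrix-operators}(iii), compute
\[
 \Mat\Bigl(\bigvee_{i}\mathcal V_{i}\Bigr)
 =\bigvee_{k}\bigvee_{i}\mathscr M_{2^{k}}(\mathcal V_{i})
 =\bigvee_{i}\Mat(\mathcal V_{i}),
\]
which only requires exchanging the two joins.

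\emph{Complete sublattice.} Let $(\mathcal U_{i})_{i\in I}$ be a family of matrix-stable varieties. For joins, \cref{thm:matrix-operators}(iii) gives
\[
 \mathscr M_{2}\Bigl(\bigvee\mathcal U_{i}\Bigr)=\bigvee\mathscr M_{2}(\mathcal U_{i})=\bigvee\mathcal U_{i},
\]
so the join is stable by the two-by-two test. For meets, put $\mathcal U=\bigcap\mathcal U_{i}$. Monotonicity of $\mathscr M_{2}$ gives $\mathscr M_{2}(\mathcal U)\leq\mathscr M_{2}(\mathcal U_{i})=\mathcal U_{i}$ for every $i$, hence $\mathscr M_{2}(\mathcal U)\leq\mathcal U$; extensivity (\cref{thm:matrix-operators}(iv)) gives the reverse inequality. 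The empty meet and empty join are the variety of all ai-semirings and the trivial variety, both of which are clearly stable: $\M_{n}$ of a trivial algebra is trivial.

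\emph{Main obstacle.} The only point needing care is monotonicity of $\mathscr M_{n}$, which is used in the meet argument. It follows because $\mathcal V\leq\mathcal V'$ makes the generating class of $\mathscr M_{n}(\mathcal V)$ a subclass of that of $\mathscr M_{n}(\mathcal V')$. Alternatively, it is a special case of join preservation, since $\mathcal V\leq\mathcal V'$ means $\mathcal V\vee\mathcal V'=\mathcal V'$.
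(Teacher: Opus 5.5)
Your proposal is correct and follows essentially the same route as the paper's proof. The paper reads the $\Mat$ and $\Core$ assertions off \cref{thm:matrix-stable,thm:right-adjoint-core}. It gets closure of matrix-stable varieties under joins from join preservation of $\mathscr M_{2}$, and under meets from monotonicity plus extensivity. You additionally spell out what the paper leaves implicit: the $k=0$ terms giving the sandwich, the formal closure and interior axioms, the exchange of joins showing that $\Mat$ preserves arbitrary joins, and the empty join and meet.
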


\begin{proof}
The assertions about $\Mat$ and $\Core$ follow from
\cref{thm:matrix-stable,thm:right-adjoint-core}.  Arbitrary joins of
matrix-stable varieties are matrix-stable because $\mathscr M_{2}$ preserves
joins.  If $(\mathcal V_{i})_{i\in I}$ are matrix-stable, monotonicity gives
\[
 \mathscr M_{2}\Bigl(\bigcap_{i\in I}\mathcal V_{i}\Bigr)
 \leq\bigcap_{i\in I}\mathscr M_{2}(\mathcal V_{i})
 =\bigcap_{i\in I}\mathcal V_{i},
\]
while extensivity gives the reverse inclusion.  Hence arbitrary meets are
matrix-stable as well.
\end{proof}

The next result gives a general restriction on the possible behaviour of a
matrix-variety chain.

\begin{theorem}[Propagation of an equality]\label{thm:equality-propagation}
Let
\[
 \mathcal V_{r}=\mathscr M_{r}(\mathcal V)\qquad(r\geq1).
\]
If $\mathcal V_{p}=\mathcal V_{q}$ for some $1\leq p<q$, then the chain
$(\mathcal V_{r})_{r\geq1}$ is eventually constant.  More precisely, if
\[
 r_{0}=\left\lceil\frac{p}{q-p}\right\rceil,
\]
then
\[
 \mathcal V_{s}=\mathcal V_{t}
 \qquad(s,t\geq r_{0}p).
\]
\end{theorem}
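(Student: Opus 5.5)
The plan is to combine the two structural facts in \cref{thm:matrix-operators}: the composition law (ii) and the monotonicity in the dimension (v). No information about $\mathcal V$ beyond the single equality $\mathcal V_{p}=\mathcal V_{q}$ will be used.

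First, I transport the equality up by scalar multiples. For every $k\geq1$, applying $\mathscr M_{k}$ to both sides of $\mathcal V_{p}=\mathcal V_{q}$ and using part (ii) gives
\[
 \mathcal V_{kp}=\mathscr M_{k}(\mathscr M_{p}(\mathcal V))
 =\mathscr M_{k}(\mathscr M_{q}(\mathcal V))=\mathcal V_{kq}.
\]
By part (v), the chain $(\mathcal V_{r})$ is monotone in $r$. Hence for every $s$ with $kp\leq s\leq kq$ we have
\[
 \mathcal V_{kp}\leq\mathcal V_{s}\leq\mathcal V_{kq}=\mathcal V_{kp}.
\]
So the chain is constant on each integer block $[kp,kq]$.

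Second, I chain these blocks together. Consecutive blocks $[kp,kq]$ and $[(k+1)p,(k+1)q]$ overlap, or at least share the endpoint, as soon as $(k+1)p\leq kq$. This inequality is equivalent to $k(q-p)\geq p$, that is, $k\geq p/(q-p)$, and so it holds for every integer $k\geq r_{0}$. Note that $r_{0}\geq1$ because $p\geq1$. By induction on $k\geq r_{0}$, the common value on the block $[kp,kq]$ equals the common value on $[r_{0}p,r_{0}q]$.

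Third, I check that these blocks cover every $s\geq r_{0}p$. Given such an $s$, take the largest $k\geq r_{0}$ with $kp\leq s$. If $s>kq$, then $s\geq kq+1\geq(k+1)p$, since $(k+1)p\leq kq$; this contradicts the maximality of $k$. Hence $s\in[kp,kq]$, and therefore $\mathcal V_{s}=\mathcal V_{r_{0}p}$ for all $s\geq r_{0}p$, which is the claim.

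The only delicate step is this covering argument, namely getting the threshold $r_{0}=\lceil p/(q-p)\rceil$ exactly right so that the blocks leave no gaps. Everything else is immediate from \cref{thm:matrix-operators}.
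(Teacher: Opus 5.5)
Your proof is correct and takes the paper's route: you apply $\mathscr M_{k}$ with the composition law to get $\mathcal V_{kp}=\mathcal V_{kq}$, use monotonicity to make the chain constant on each block $[kp,kq]$, and link consecutive blocks through the shared index $(k+1)p$ once $k\geq r_{0}$. Your maximal-$k$ covering argument just spells out a step the paper states in one line.
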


\begin{proof}
For every positive integer $r$, apply $\mathscr M_{r}$ to
$\mathcal V_{p}=\mathcal V_{q}$.  By
\cref{thm:matrix-operators}(ii),
\[
 \mathcal V_{rp}=\mathcal V_{rq}.
\]
The chain is increasing, so it is constant on every integer interval
$[rp,rq]$.  For $r\geq r_{0}$,
\[
 (r+1)p\leq rq,
\]
so the intervals $[rp,rq]$ and $[(r+1)p,(r+1)q]$ overlap.  Their union, for
$r\geq r_{0}$, contains every integer at least $r_{0}p$.  Since consecutive
intervals share an index, the constant values on all of them are equal.
\end{proof}

\begin{corollary}\label{cor:dichotomy}
For every ai-semiring variety $\mathcal V$, the chain
\[
 \mathcal V\leq\mathscr M_{2}(\mathcal V)
 \leq\mathscr M_{3}(\mathcal V)\leq\cdots
\]
is either strictly increasing at every step or eventually constant.
Moreover, it is constant from $p$ onward if and only if
\[
 \mathscr M_{p}(\mathcal V)=\mathscr M_{2p}(\mathcal V).
\]
\end{corollary}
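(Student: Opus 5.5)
We deduce both assertions of \cref{cor:dichotomy} from \cref{thm:equality-propagation} together with the monotonicity in \cref{thm:matrix-operators}(v). Write $\mathcal V_{r}=\mathscr M_{r}(\mathcal V)$.

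For the dichotomy, suppose the chain is not strictly increasing at every step. Then $\mathcal V_{p}=\mathcal V_{p+1}$ for some $p\geq1$. We apply \cref{thm:equality-propagation} with $q=p+1$, so that $r_{0}=\lceil p/1\rceil=p$. This gives $\mathcal V_{s}=\mathcal V_{t}$ for all $s,t\geq p^{2}$, so the chain is eventually constant. Note that a strict step and an eventual equality are not mutually exclusive at finitely many places; the dichotomy only concerns whether every step is strict.

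For the characterization, first assume the chain is constant from $p$ onward. Then $\mathcal V_{p}=\mathcal V_{2p}$ trivially. Conversely, assume $\mathcal V_{p}=\mathcal V_{2p}$. Applying $\mathscr M_{r}$ and using \cref{thm:matrix-operators}(ii), we get $\mathcal V_{rp}=\mathcal V_{2rp}$ for every $r\geq1$. Monotonicity then makes the chain constant on each interval $[rp,2rp]$.

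The main step is to check that these intervals chain together starting exactly at $p$. For $r\geq1$ we have $(r+1)p\leq 2rp$, so consecutive intervals $[rp,2rp]$ and $[(r+1)p,2(r+1)p]$ overlap. Their union over $r\geq1$ is therefore all integers $\geq p$, and because consecutive intervals share an index, the common value is the same throughout. This is the same overlap argument as in \cref{thm:equality-propagation} with $q=2p$; there $r_{0}=\lceil p/p\rceil=1$, so the threshold $r_{0}p=p$ is already the one we need. I expect no real obstacle beyond noticing this: the general theorem with $q=2p$ yields constancy from $p$ directly, so it can simply be cited.
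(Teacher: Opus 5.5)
Your proof is correct and follows the paper's argument. The dichotomy is \cref{thm:equality-propagation} applied to any occurring equality; your choice $q=p+1$ just makes the threshold $p^{2}$ explicit. The characterization is the same theorem with $q=2p$, where $r_{0}=1$, so re-deriving the overlap of the intervals $[rp,2rp]$ is harmless but, as you note, can be replaced by citing it.
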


\begin{proof}
If any equality occurs, \cref{thm:equality-propagation} gives eventual
constancy.  If no equality occurs, every step is strict.  For the final
assertion, necessity is clear.  If the displayed equality holds, apply
\cref{thm:equality-propagation} with $q=2p$; then $r_{0}=1$.
\end{proof}

\end{document}